\begin{document}

\newtheorem{theorem}{Theorem}[section]
\newtheorem{result}[theorem]{Result}
\newtheorem{fact}[theorem]{Fact}
\newtheorem{conjecture}[theorem]{Conjecture}
\newtheorem{definition}[theorem]{Definition}
\newtheorem{lemma}[theorem]{Lemma}
\newtheorem{proposition}[theorem]{Proposition}
\newtheorem{remark}[theorem]{Remark}
\newtheorem{corollary}[theorem]{Corollary}
\newtheorem{facts}[theorem]{Facts}
\newtheorem{props}[theorem]{Properties}

\newtheorem{ex}[theorem]{Example}

\newcommand{\notes} {\noindent \textbf{Notes.  }}
\newcommand{\note} {\noindent \textbf{Note.  }}
\newcommand{\defn} {\noindent \textbf{Definition.  }}
\newcommand{\defns} {\noindent \textbf{Definitions.  }}
\newcommand{\x}{{\bf x}}
\newcommand{\z}{{\bf z}}
\newcommand{\B}{{\bf b}}
\newcommand{\V}{{\bf v}}
\newcommand{\T}{\mathbb{T}}
\newcommand{\Z}{\mathbb{Z}}
\newcommand{\Hp}{\mathbb{H}}
\newcommand{\D}{\mathbb{D}}
\newcommand{\R}{\mathbb{R}}
\newcommand{\N}{\mathbb{N}}
\renewcommand{\B}{\mathbb{B}}
\newcommand{\C}{\mathbb{C}}
\newcommand{\dt}{{\mathrm{det }\;}}
 \newcommand{\adj}{{\mathrm{adj}\;}}
 \newcommand{\0}{{\bf O}}
 \newcommand{\av}{\arrowvert}
 \newcommand{\zbar}{\overline{z}}

\newcommand{\ds}{\displaystyle}
\numberwithin{equation}{section}

\renewcommand{\theenumi}{(\roman{enumi})}
\renewcommand{\labelenumi}{\theenumi}

\title{Decomposing diffeomorphisms of the sphere}

\author{Alastair Fletcher, Vladimir Markovic}

\maketitle

\section{Introduction}

\subsection{Background}

A bi-Lipschitz homeomorphism $f:X \rightarrow Y$ between metric spaces is a mapping $f$ such that $f$ and $f^{-1}$ satisfy a uniform Lipschitz condition, that is, there exists $L \geq 1$ such that
\begin{equation*}
\frac{ d_{X}(x,y) }{L} \leq d_{Y}(f(x),f(y)) \leq Ld_{X}(x,y)
\end{equation*}
for all $x,y \in X$. The smallest such constant $L$ is called the {\it isometric distortion} of $f$.
In the metric space setting, a homeomorphism $f:X \rightarrow Y$ is called
quasiconformal if there exists a constant $H \geq 1$ such that
\begin{equation*}
H_{f}(x) : = \limsup _{r \rightarrow 0} \frac
{\sup \{ d_{Y}(f(x),f(y)) : d_{X}(x,y) \leq r \}}
{\inf \{ d_{Y}(f(x),f(y)) : d_{X}(x,y) \geq r \}} \leq H
\end{equation*}
for all $x \in X$. The constant $H$ is called the {\it conformal distortion} of $f$. 
This definition coincides with the perhaps more familiar analytic definition
of quasiconformal mappings in $\R^{n}$.

Let $S^{n}$ be the sphere of dimension $n$ and denote by $QC(S^{n})$ and $LIP(S^{n})$ the orientation preserving quasiconformal and bi-Lipschitz homeomorphisms, respectively, 
of $S^{n}$. An old central problem in this area is the following.

\begin{conjecture}
Let $f$ be in either $QC(S^{n})$ or $LIP(S^{n})$. Then $f$ can be written as a decomposition $f=f_{m} \circ \ldots \circ f_{1}$ where each 
$f_{k}$ has small conformal distortion or isometric distortion respectively.
\end{conjecture}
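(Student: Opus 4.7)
The plan is to reduce the conjecture to the construction of a continuous path $\{f_t\}_{t \in [0,1]}$ inside $QC(S^n)$ (respectively $LIP(S^n)$) joining $f_0 = \mathrm{id}$ to $f_1 = f$, with the property that the distortion of the increment $f_s \circ f_t^{-1}$ tends to $1$ as $|s - t| \to 0$. Granted such a path, one chooses a partition $0 = t_0 < t_1 < \cdots < t_m = 1$ fine enough that each $f_k := f_{t_k} \circ f_{t_{k-1}}^{-1}$ has distortion below the prescribed threshold; the telescoping identity $f = f_m \circ \cdots \circ f_1$ then yields the decomposition. Before building the path, it is convenient to normalize by post-composing $f$ with a Möbius transformation that sends a designated base frame to itself, since Möbius maps are $1$-conformal and have bi-Lipschitz constants controllable in terms of how far the frame is moved.

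For the QC case in dimension two, the isotopy is furnished by the Beltrami coefficient $\mu_f$ of $f$: setting $\mu_t = t\mu_f$ and invoking the measurable Riemann mapping theorem to solve for $f_t$, the composition rule for Beltrami coefficients gives that $f_s \circ f_t^{-1}$ has Beltrami coefficient of order $|s - t|$, hence conformal distortion close to $1$. In higher dimensions no such analytic parameterization exists, and I would instead use a canonical extension or deformation procedure, for example viewing $S^n$ as the boundary of the ball $\mathbb{B}^{n+1}$, constructing a Douady--Earle type extension, and pushing a radial interpolation back to the boundary; alternatively, a Sullivan--Tukia smoothing argument could be used to produce QC diffeomorphisms joining $\mathrm{id}$ to $f$ with quantitative control of the intermediate distortions.

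For the bi-Lipschitz case no infinitesimal device analogous to the Beltrami equation is available, and this is what makes the problem subtle. A reasonable attempt is to approximate $f$ by a diffeomorphism with comparable isometric distortion, join the identity to this diffeomorphism by a time-dependent flow generated by a vector field $X_t$, and decompose the flow into short segments whose bi-Lipschitz constants are estimated in terms of $\|X_t\|_\infty$ and the time step via Grönwall-type bounds on $DF_t$. The delicate point is the quantitative smoothing: the vector field generating the isotopy must be produced together with explicit bounds tied to the bi-Lipschitz constant of $f$, not merely to its diffeomorphism class.

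The principal obstacle in both cases is precisely the construction of the isotopy together with quantitative continuity of the distortion along it. Beyond dimension two, this construction must substitute for the absence of any analytic parameterization of $QC(S^n)$, and for $LIP(S^n)$ the lack of a genuine deformation theory is, in my view, the deepest difficulty; one probably cannot avoid combining a canonical extension or smoothing with local geometric surgery that unwinds $f$ on patches of the sphere, and reconciling these two ingredients into a single controlled path is presumably the main content of the paper to follow.
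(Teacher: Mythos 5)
This statement is labelled a \emph{conjecture} in the paper, and the paper does not prove it; it is an open problem, and the authors state explicitly that only $QC(S^1)$ and $QC(S^2)$ are settled. What the paper actually proves (Theorems \ref{mainthm} and \ref{maincor}) is a partial result toward the $LIP(S^n)$ case: every $C^1$ diffeomorphism of $S^n$ — not every bi-Lipschitz homeomorphism — factors into $(1+\epsilon)$-bi-Lipschitz maps. So any ``proof'' of the conjecture as stated overreaches the paper.

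Your high-level strategy — build a path $f_t$ from $\mathrm{id}$ to $f$ along which the increments $f_s\circ f_t^{-1}$ have distortion tending to $1$, then partition $[0,1]$ and telescope — is exactly the reduction the paper uses (see Definition \ref{bilippath} and the proof of Theorem \ref{maincor}). Your account of the $QC(S^2)$ case via the Beltrami coefficient $\mu_t=t\mu_f$ is also the standard argument the authors allude to. These parts are fine, modulo the caveat that they establish only what was already known.

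The genuine gap is in your proposed construction of the path in the bi-Lipschitz case. You suggest approximating $f$ by a diffeomorphism with comparable distortion and then integrating a time-dependent vector field $X_t$ with Gr\"onwall control. This fails for the very reason the paper flags in the introduction: even for $C^1$ diffeomorphisms of $S^n$, there is in general no $C^1$ isotopy to the identity — Milnor's exotic $7$-spheres obstruct finding a $C^1$ path from $\mathrm{id}_{S^6}$ to a given diffeomorphism, so a flow generated by a vector field cannot exist. (And for genuine $LIP(S^n)$ maps, the prior approximation step — replacing $f$ by a diffeomorphism at all — runs into Sullivan's theorem that bi-Lipschitz structures exist where smooth ones need not, another obstruction the paper discusses.) The paper sidesteps this entirely: it never constructs a $C^1$ isotopy. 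Instead it (i) uses Munkres' theorem (Theorem \ref{thm1}) to split a $C^1$ diffeomorphism with a fixed point into two pieces supported on round balls; (ii) conjugates each piece by a M\"obius map into $B_d(0,1/3)$; and (iii) for such a localized $f$ builds the \emph{propagated} map $g$ (copies of $f$ placed in the translated balls $B_m=A_m(B_0)$) and defines the path by the commutator-like formula $h_t=g^{-1}\circ A_t^{-1}\circ g\circ A_t$. This $h_t$ is a path in $LIP(\R^n)$, not in the diffeomorphism group, and the increments are controlled by the modulus of continuity of $Dg$ (Lemmas \ref{lemma6} and \ref{lemma7}) rather than by a generating vector field. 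That device is the paper's essential new idea and is absent from your proposal. Your suggestions for higher-dimensional $QC(S^n)$ (Douady--Earle or Sullivan--Tukia smoothing with quantitative distortion control) are, as far as I know, not known to work and are not addressed in this paper either.
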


The conjecture is known for the class $QC(S^{2})$ and is essentially a consequence of solving the Beltrami equation in the plane, see for example
\cite{FM}. The quasisymmetric case $QC(S^{1})$ also follows from the dimension $2$ case.

It is well-known that every $L$-bi-Lipschitz homeomorphism between two intervals
can be factored into bi-Lipschitz mappings with smaller isometric distortion $\alpha$. Such a factorisation can be written explicitly in the following way. 
Let $f:I \rightarrow I'$ be an $L$-bi-Lipschitz mapping. Then $f$ can be written
as $f=f_{2} \circ f_{1}$, where 
\begin{equation*}
f_{1}(x) = \int_{x_{0}}^{x} \av f'(t) \av ^{\lambda}\: dt,
\end{equation*}
$x_{0} \in I$ is fixed, $\lambda = \log _{L} \alpha$, $f_{1}$ is
$\alpha$-bi-Lipschitz and $f_{2} = f \circ f_{1}^{-1}$ is $L/\alpha$-bi-Lipschitz.
It follows that to factorise an $L$-bi-Lipschitz mapping into $\alpha$-bi-Lipschitz mappings requires $N<\log_{\alpha}L + 1$ factors.

In dimension $2$, Freedman and He \cite{FH} studied the logarithmic spiral map
$s_{k}(z) = z e^{ik \log \av z \av}$, which is an $L$-bi-Lipschitz mapping of the plane where $\av k \av = L - 1/L$. They showed that $s_{k}$ requires $N \geq \av k \av (\alpha^{2}-1)^{-1/2}$ factors to be represented as a composition of $\alpha$-bi-Lipschitz mappings. Gutlyanskii and Martio \cite{GM} studied a related class of mappings in dimension $2$, and generalized this to a class of volume preserving bi-Lipschitz automorphisms of the unit ball $\B^{3}$ in $3$ dimensions.
Beyond these particular examples, however, very little is known about factorising
bi-Lipschitz mappings in dimension $2$ and higher, and factorizing quasiconformal maps in dimension $3$ and higher.

A natural question to ask is whether diffeomorphisms of the sphere $S^{n}$ can
be decomposed into diffeomorphisms that are $C^{1}$ close to the identity. The answer in general is negative as the exotic spheres of Milnor \cite{Milnor} provide an obstruction.
In \cite{Milnor}, it is shown that there exist topological $7$-spheres
which are not diffeomorphic to the standard $7$-sphere $S^{7}$.
In particular, one cannot in general find a $C^{1}$ path from the identity on $S^{6}$ to a given $C^{1}$ diffeomorphism. 

There are two facts that might be obstructions to the factorisation theorem. One is the Milnor example. 
The second fact is that not all topological manifolds of dimension at least $5$ admit differentiable structures. On the other hand, a deep result of Sullivan \cite{Sullivan} states that they always admit
a bi-Lipschitz structure. The recent results of Bonk, Heinonen and Wu \cite{Wu} which state that closed bi-Lipschitz manifolds where the transition maps have small enough distortion admit a $C^{1}$ structure,
raises the question of whether a factorisation theorem in this case would contradict Sullivan's theorem.

\subsection{Main results}

Since some $C^{1}$
diffeomorphisms of $S^{n}$ cannot be decomposed into $C^{1}$ diffeomorphisms with derivative close to the identity, that suggests the question of trying to factor them into bi-Lipschitz mappings of small isometric distortion. 

The main result of this paper states that one can find a path
connecting the identity and any $C^{1}$ diffeomorphism of $S^{n}$ which is a composition of bi-Lipschitz paths, a notion that will be made more precise in \S 2.

\begin{theorem}
\label{mainthm}
Let $f:S^{n} \rightarrow S^{n}$ be a $C^{1}$ diffeomorphism. Then there exist  bi-Lipschitz paths $A_{t},p^{1}_{t},p^{2}_{t}:S^{n} \rightarrow S^{n}$ for $t \in [0,1]$ such that $A_{0},p^{1}_{0}$ and $p^{2}_{0}$ are all the identity, and $A_{1} \circ p^{2}_{1} \circ p^{1}_{1} = f$.
\end{theorem}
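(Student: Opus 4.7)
The plan is to write $f$ as a composition of three bi-Lipschitz maps, each of which is joined to the identity by a bi-Lipschitz isotopy for a structural reason. For the path $A_t$ the reason is that the M\"obius group of $S^n$ is connected and acts by bi-Lipschitz homeomorphisms. For $p^i_t$ it will be that $p^i_1$ is the identity on an open neighbourhood of a point, so that after stereographic projection from that point $p^i_1$ becomes a compactly supported bi-Lipschitz self-map of $\R^n$. For such a compactly supported $F$, the Alexander trick $F_s(x)=sF(x/s)$ for $s\in(0,1]$ (with $F_0=\mathrm{id}$) provides a bi-Lipschitz isotopy back to the identity with the same bi-Lipschitz constant as $F$.

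For the M\"obius normalisation step I would choose a M\"obius transformation $A_1$ of $S^n$ with $A_1(N)=f(N)$ and $A_1(S)=f(S)$, where $N,S$ are the north and south poles; the $2$-transitive action of the M\"obius group on $S^n$ guarantees existence, and path-connectedness provides a path $A_t$ of M\"obius maps joining the identity to $A_1$. Setting $g:=A_1^{-1}\circ f$, this is a $C^1$ diffeomorphism fixing both $N$ and $S$, hence bi-Lipschitz by compactness of $S^n$.

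The main step is then the decomposition $g=p^2_1\circ p^1_1$, with $p^1_1$ equal to the identity near $N$ and $p^2_1$ equal to the identity near $S$. Identifying $g$ with its stereographic image $G:\R^n\to\R^n$ based at $N$, so that $G(0)=0$, I pick radii $0<r<R$ with $G(\overline{B_r})\subset B_R$ and define $p^1_1$ (in the stereographic model) to equal $G$ on $\overline{B_r}$ and the identity outside $B_R$, with a bi-Lipschitz interpolation on the annulus $B_R\setminus\overline{B_r}$ realising these two boundary conditions. The complementary factor $p^2_1:=g\circ(p^1_1)^{-1}$ then equals the identity on $p^1_1(\overline{B_r})=G(\overline{B_r})$, which is a neighbourhood of $S$ in $S^n$. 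After stereographic projection from the pole near which each $p^i_1$ is the identity, each $p^i_1$ becomes compactly supported, and the Alexander trick above yields the bi-Lipschitz isotopies $p^i_t$. Combined with $A_t$, this gives the required factorisation $f=A_1\circ p^2_1\circ p^1_1$.

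The crux of the argument is producing the bi-Lipschitz interpolation in the annulus. Equivalently, one must show that the bi-Lipschitz map $G$ admits, for some $0<r\ll R$, a bi-Lipschitz extension agreeing with $G$ on $\overline{B_r}$ and with the identity outside $B_R$. I anticipate this uses a radial reparametrisation near $0$ that conjugates $G$ into a map close to its linearisation $DG(0)$ on the relevant scale, exploiting the $C^1$ regularity of $f$, combined with a controlled interpolation from that linear map to the identity through bi-Lipschitz maps of $\R^n$. This is the most delicate part of the argument and is essentially what forces the presence of a separate M\"obius factor $A_t$ in the decomposition, since the M\"obius group alone is not large enough to absorb an arbitrary linear derivative at a point.
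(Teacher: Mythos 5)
Your overall architecture (a M\"obius normalisation, a decomposition into two factors each equal to the identity near a pole, then a path to the identity for each compactly supported factor) mirrors the paper's strategy; the paper achieves the second step with Munkres' pasting theorem rather than an explicit annular interpolation, and normalises with a rotation $A\in SO(n)$ rather than a two-point M\"obius match, but those are minor variations. The decisive gap is in the third step: the Alexander trick does not produce a bi-Lipschitz path in the sense of Definition~\ref{bilippath}, and this is precisely the notion the theorem requires.

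Recall that a bi-Lipschitz path must satisfy: for every $\epsilon>0$ there is $\delta>0$ such that $h_s\circ h_t^{-1}$ is $(1+\epsilon)$-bi-Lipschitz whenever $\av s-t\av<\delta$. Take $t=0$ in the Alexander construction $h_s(x)=sF(x/s)$, $h_0=\mathrm{id}$. Then $h_s\circ h_0^{-1}=h_s$, and, as you yourself observe, $h_s$ has \emph{exactly} the bi-Lipschitz constant $L$ of $F$, for every $s\in(0,1]$. So unless $F$ is already $(1+\epsilon)$-bi-Lipschitz, the incremental maps near $t=0$ never become close to isometries, and condition (ii) of Definition~\ref{bilippath} fails no matter how small $\delta$ is. (The same lack of uniformity appears for $0<t<s$ small with $s/t$ bounded away from $1$: the derivative of $h_s\circ h_t^{-1}$ at $y$ is $D_{(t/s)F^{-1}(y/t)}F\cdot D_{y/t}F^{-1}$, and the base-point perturbation $(t/s-1)F^{-1}(y/t)$ does not tend to $0$ uniformly as $\av s-t\av\to0$.) This is not a technical nuisance but the essential difficulty: Theorem~\ref{maincor} is derived from Theorem~\ref{mainthm} precisely by slicing a bi-Lipschitz path into increments of distortion $1+\epsilon$, and the Alexander path has no such slicing. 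Consistent with this, the paper's Lemma~\ref{thm2} uses an entirely different mechanism: it propagates the compactly supported map $f$ by all integer translates to obtain a map $g$ with uniformly continuous derivative, and sets $h_t=g^{-1}\circ A_t^{-1}\circ g\circ A_t$ with $A_t$ the translation by $te_1$; uniform bounds on $Dg$ and its modulus of continuity (Lemma~\ref{lemma3}) then yield $\av\av D_x(h_s\circ h_t^{-1})-I\av\av\to0$ uniformly as $\av s-t\av\to0$, which is exactly the property the Alexander trick lacks. To repair your proof you would need to replace the Alexander trick by some construction of this kind, which is where all the real work lies.
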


\begin{remark}
It is not a priori true that a composition of  bi-Lipschitz paths is another bi-Lipschitz path since issues arise at points
of non-differentiability.
\end{remark}

As a corollary to this theorem, we find that
$C^{1}$ diffeomorphisms of the sphere $S^{n}$ can be decomposed into bi-Lipschitz mappings of arbitrarily small
isometric distortion.

\begin{theorem}
\label{maincor}
Let $f:S^{n} \rightarrow S^{n}$ be a $C^{1}$ diffeomorphism. Given $\epsilon >0$, there exists $m \in \N$, depending on $f$, such that
$f$ decomposes as
$f=f_{m} \circ \ldots \circ f_{1}$, where $f_{k}$ is $(1+\epsilon)$-bi-Lipschitz
with respect to the spherical metric $\chi$, and $\chi(f_{k}(x),x) < \epsilon$ for all $x \in S^{n}$ and for $k=1,\ldots,m$.
\end{theorem}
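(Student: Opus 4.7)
The strategy is to derive Theorem \ref{maincor} directly from Theorem \ref{mainthm} by finely subdividing each of the three bi-Lipschitz paths whose endpoint composition is $f$. First I would invoke Theorem \ref{mainthm} to obtain bi-Lipschitz paths $A_t, p^{1}_{t}, p^{2}_{t}$ with $A_0 = p^{1}_{0} = p^{2}_{0} = \mathrm{id}$ and $A_1 \circ p^{2}_{1} \circ p^{1}_{1} = f$. For each such path $\gamma_t$, I would pick a partition $0 = t_0 < t_1 < \cdots < t_N = 1$ and set $g_k^{\gamma} := \gamma_{t_k} \circ \gamma_{t_{k-1}}^{-1}$, so that $\gamma_1 = g_N^{\gamma} \circ \cdots \circ g_1^{\gamma}$. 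Concatenating these lists in the correct order --- first the increments of $p^{1}_{t}$, then those of $p^{2}_{t}$, then those of $A_t$ --- produces a decomposition $f = f_{3N} \circ \cdots \circ f_1$. It then remains to check that if $N$ is large enough, every factor is $(1+\epsilon)$-bi-Lipschitz with respect to the spherical metric $\chi$ and satisfies $\chi(f_k(x), x) < \epsilon$ uniformly.

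The heart of the proof is therefore a continuity statement for bi-Lipschitz paths: as the mesh of the partition tends to zero, the increments $\gamma_{t_k} \circ \gamma_{t_{k-1}}^{-1}$ must tend to the identity both in the bi-Lipschitz sense (constants tending to $1$) and in the sup norm (displacement tending to $0$). This is precisely where the definition of a bi-Lipschitz path from \S 2 is meant to be used: presumably that definition supplies a Lipschitz-type dependence $\chi(\gamma_s(x), \gamma_t(x)) \leq C|s-t|$ uniform in $x \in S^n$, together with a uniform bound on the family of bi-Lipschitz constants $L_t$ for $t \in [0,1]$. Granted these two ingredients, a routine application of the triangle inequality, together with the elementary bound that a composition of an $L_1$- and an $L_2$-bi-Lipschitz map is $L_1 L_2$-bi-Lipschitz, yields both the distortion bound $1+\epsilon$ and the displacement bound $\epsilon$ on each $g_k^{\gamma}$.

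The main obstacle I anticipate is thus not conceptual but technical: verifying that the notion of bi-Lipschitz path constructed in \S 2 indeed comes with enough uniform regularity in $t$ for the above argument to go through. In particular, one must rule out the possibility that $L_t$ blows up or that the modulus of continuity in $t$ degenerates at the endpoints. This subtlety is the reason one cannot simply glue the three paths together and then subdivide: as noted in Remark 1.3, a composition of bi-Lipschitz paths need not itself be a bi-Lipschitz path, so the three pieces really must be treated separately and only their discretized increments concatenated at the end.
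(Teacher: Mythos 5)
Your overall strategy is exactly the paper's: invoke Theorem \ref{mainthm}, pick a fine partition of $[0,1]$ for each of the three paths, form the increments $\gamma_{t_k}\circ\gamma_{t_{k-1}}^{-1}$, and concatenate. However, the place where you wave your hands --- the ``routine application of the triangle inequality'' --- is where a genuine gap appears, and it stems from guessing the wrong definition of a bi-Lipschitz path. You hypothesize that the definition gives (a) a uniform Lipschitz-in-$t$ dependence $\chi(\gamma_s(x),\gamma_t(x))\le C|s-t|$ and (b) a uniform bound $L_t\le L$ on the bi-Lipschitz constants, and you then claim that composing and applying the triangle inequality yields the $(1+\epsilon)$ distortion bound on each increment. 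That implication is false. From (a) you do get the displacement estimate $\chi(g_k^\gamma(x),x)\le C|t_k-t_{k-1}|$, but from (b) all you can say is that each increment is $L^2$-bi-Lipschitz; nothing in (a) and (b) forces the distortion of $\gamma_s\circ\gamma_t^{-1}$ to tend to $1$ as $|s-t|\to 0$. Small displacement does not imply small isometric distortion --- consider a map that is the identity outside a tiny ball but doubles distances inside it.

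The paper avoids this issue because Definition \ref{bilippath} is built precisely so that there is nothing to derive: a bi-Lipschitz path is \emph{defined} by the property that for every $\epsilon>0$ there is a $\delta>0$ such that $|s-t|<\delta$ implies both $d_X(h_s\circ h_t^{-1}(x),x)<\epsilon$ for all $x$ and that $h_s\circ h_t^{-1}$ is $(1+\epsilon)$-bi-Lipschitz. With that definition, your step --- choose the partition with mesh less than $\delta$ --- immediately yields both required bounds on each factor, and the theorem follows with $m=j(1)+j(2)+j(3)$. Your worry about the bi-Lipschitz constants blowing up or the modulus of continuity degenerating at the endpoints is also dissolved by the definition, since the $\delta$ there is uniform over $s,t\in[0,1]$. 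In short, replace your guessed conditions (a) and (b) with the actual Definition \ref{bilippath}, delete the claimed derivation, and the argument closes; your observation about Remark 1.3 (the three paths must be discretized separately rather than first composed) is correct and matches the paper.
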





In \S 2, we will state several intermediate lemmas and prove Theorem \ref{mainthm} and Corollary \ref{maincor} assuming these lemmas hold. The proofs of the lemmas are postponed to \S 3. 

\section{Outline of proof}

\subsection{Some notation}

We will first fix some notation. Let $S^{n} = \R^{n} \cup \{ \infty \}$ be the sphere of dimension $n$. 
Denote by $d$ the Euclidean metric on $\R^{n}$ and by $\chi$ the spherical metric on $S^{n}$, so that
\begin{equation*}
d(x,y) = \av x-y \av,
\end{equation*}
for $x,y \in \R^{n}$ and
\begin{equation*}
\chi(x,y) = \frac{\av x-y \av}{\sqrt{1+\av x \av^{2}} \sqrt{1+\av y \av ^{2}}}
\end{equation*}
for $x,y \in S^{n} \setminus \{ \infty \}$. If $y$ is the point at infinity,
\begin{equation*}
\chi(x, \infty) = \frac{1}{\sqrt{1+ \av x \av ^{2}}}.
\end{equation*}
Let $B_{d}(x,r) = \{ y \in \R^{n}: d(x,y)\leq r\} $ and $B_{\chi}(x,r)=\{ y \in S^{n} : \chi (x,y)\leq r\}$ 
be the closed balls centred at $x$ of respectively Euclidean and spherical radius $r$.
We say that a diffeomorphism $f$ is supported on a set $U \subset S^{n}$ if $f$ is the identity on the complement $S^{n}\setminus U$. 

\subsection{Diffeomorphisms supported on balls}

We first need to show that a $C^{1}$ diffeomorphism with a fixed point can be written as a composition of $C^{1}$ diffeomorphisms supported on spherical balls.

\begin{lemma}
\label{lemma1}
Let $f:S^{n} \rightarrow S^{n}$ be a $C^{1}$ diffeomorphism with at least one fixed point. Then there exist $x_{1},x_{2} \in S^{n}$ and $r_{1},r_{2}>0$ such that
$f$ decomposes as
$f=f^{2} \circ f^{1}$ where $f^{1},f^{2}$ are $C^{1}$ diffeomorphisms supported on spherical balls $B_{1}=B_{\chi}(x_{1},r_{1}),B_{2} = B_{\chi}(x_{2},r_{2})$ in $S^{n}$,
and so that neither $B_{1}$ nor $B_{2}$ are $S^{n}$.
\end{lemma}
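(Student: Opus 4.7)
The plan is to pick a point $q \neq p$, arrange for $f^1$ to be the identity on a small chordal ball around $q$ (trapping its support in the complementary cap), and build $f^2$ to coincide with $f$ on that small ball while being supported inside a large chordal ball missing $p$; then setting $f^1 := (f^2)^{-1} \circ f$ automatically yields the factorisation. The case $f = \text{id}$ is trivial, so assume $f \neq \text{id}$ and fix $q \in S^n \setminus \{p\}$ with $f(q) \neq q$.

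By continuity, for $\rho > 0$ sufficiently small the closed ball $U := B_\chi(q, \rho)$ and its image $V := f(U)$ are disjoint and $K := U \cup V$ satisfies $\chi(K, p) > 0$. Pick $\delta \in (0, \chi(K, p))$. From the chordal formula one directly verifies the antipodal identity $\chi(x, y)^2 + \chi(x, y^*)^2 = 1$ for every antipodal pair $y, y^*$ on $S^n$ (testing $y = 0$, $y^* = \infty$ gives $|x|^2/(1+|x|^2) + 1/(1+|x|^2) = 1$, and the general case follows by passing to the Euclidean embedding $S^n \subset \R^{n+1}$). Consequently $\{x : \chi(x, p) \geq \delta\} = B_\chi(p^*, \sqrt{1-\delta^2})$, which is a proper closed ball containing $K$ in its interior; I set this to be $B_2$.

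The key step is to construct a $C^1$ diffeomorphism $f^2$ of $S^n$ agreeing with $f$ on $U$ and equal to the identity outside $\text{int}(B_2)$. Shrinking $\rho$ further, $f|_U$ becomes $C^1$-close to the affine approximation $x \mapsto f(q) + Df(q)(x-q)$ in local coordinates, and since $f$ is orientation-preserving this map can be joined through a $C^1$ isotopy of embeddings $U \hookrightarrow \text{int}(B_2)$ to the inclusion (use a path in the orientation-preserving linear group to deform $Df(q)$ to the identity, then a straight-line homotopy from $f|_U$ to its affine approximation, which remains an embedding for $\rho$ small). The $C^1$ isotopy extension theorem applied to the compact ball $U$ inside the open manifold $\text{int}(B_2)$ lifts this to an ambient isotopy of $\text{int}(B_2)$ supported in a compact subset; its time-one map, extended by the identity on $S^n \setminus \text{int}(B_2)$, is the required $f^2$. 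I expect this step, with its appeal to isotopy extension and the need to ensure $C^1$ regularity up to $\partial B_2$, to be the main technical obstacle.

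Finally, define $f^1 := (f^2)^{-1} \circ f$; this is a $C^1$ diffeomorphism of $S^n$, and since $f^2 = f$ on $U$ it equals the identity on the open ball $\{x : \chi(x, q) < \rho\}$. The antipodal identity again shows that the complement of this open ball is the closed chordal ball $B_1 := B_\chi(q^*, \sqrt{1-\rho^2})$, which contains the support of $f^1$; since $\rho > 0$ we have $\sqrt{1-\rho^2} < 1$, so $B_1 \neq S^n$. Thus $f = f^2 \circ f^1$ with $f^1, f^2$ supported on proper closed spherical balls $B_1, B_2$, as required.
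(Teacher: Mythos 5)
Your approach is genuinely different from the paper's. The paper sends the fixed point to $\infty$ and invokes Munkres' lemma (their Theorem 2.2) to produce a single $C^1$ diffeomorphism $\widetilde{f}$ of $\R^n$ that is the identity near $0$ and equals $f$ near $\infty$; setting $f^1=\widetilde{f}$ and $f^2=f\circ\widetilde{f}^{-1}$ immediately yields the factorisation, with the two supports being complementary-looking spherical caps. You instead build $f^2$ by hand via the $C^1$ isotopy extension theorem, arranging $f^2$ to coincide with $f$ on a small ball $U$ around a non-fixed point $q$ and to be the identity outside a large cap $B_2$ missing the fixed point $p$. The two routes rely on cognate machinery (Munkres' lemma is itself proved by isotopy arguments), but the paper's version is cleaner because the citation packages exactly the interpolation you have to construct. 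Your antipodal identity $\chi(x,y)^2+\chi(x,y^*)^2=1$, used to express complements of caps as caps, is a nice observation that the paper handles implicitly.

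One point you should tighten: you fix $B_2$ to be any proper cap whose interior contains $K=U\cup f(U)$, but you then ask for an isotopy of embeddings $U\hookrightarrow\operatorname{int}(B_2)$ from $f|_U$ to the inclusion. That isotopy must translate the image from a neighbourhood of $f(q)$ back to a neighbourhood of $q$, and the intermediate images need not stay inside $\operatorname{int}(B_2)$ unless $B_2$ is chosen large enough to contain (a neighbourhood of) a path from $q$ to $f(q)$ that avoids $p$. This is easy to arrange --- take $\delta$ small enough that $B_2=B_\chi(p^*,\sqrt{1-\delta^2})$ swallows such a path, which exists since $\{p\}$ does not separate $S^n$ --- but it should be said explicitly, since as written $\delta$ is only constrained by $\chi(K,p)$. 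You should also note, either as a standing hypothesis or by observing that Theorem 1.2 only uses the lemma for orientation-preserving maps, that orientation preservation is needed both for your linear path in $GL^+(n,\R)$ and for the paper's appeal to Munkres. With these small repairs your argument is sound.
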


To prove the lemma, we will need to make use of the following result of Munkres \cite[Lemma 8.1]{Munkres} as formulated in \cite{Wilson}.

\begin{theorem}[\cite{Munkres}]
\label{thm1}
Let $h:\R^{n} \rightarrow \R^{n}$ be an orientation preserving $C^{k}$ diffeomorphism for $1 \leq k \leq \infty$. Then
there exists a $C^{k}$ diffeomorphism $\widetilde{h} :\R^{n} \rightarrow \R^{n}$ which coincides with the identity near $0 \in \R^{n}$
and $h$ near infinity.
\end{theorem}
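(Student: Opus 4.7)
The plan is a smooth version of the Alexander trick. The strategy is to isotope $h$ to a linear map near $0$ via a radial scaling, then isotope that linear map to the identity using path-connectedness of $GL^{+}(n,\R)$, all while leaving $h$ unchanged near infinity.

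The main construction requires $h(0)=0$. I will first treat the case $h(0)=0$ and reduce the general case to it at the end by absorbing the constant $h(0)$ via a radial cutoff on a large outer annulus. Assume then $h(0)=0$, and set $L = Dh(0)\in GL^{+}(n,\R)$. Consider the smooth family $h_{s}(x) = h(sx)/s$ on $s\in (0,1]$, extended smoothly to $h_{0}(x) = Lx$ by Taylor expansion at $0$. For a smooth cutoff $\sigma:[0,\infty)\to[0,1]$ with $\sigma\equiv 0$ on $[0,\eta]$ and $\sigma\equiv 1$ on $[2\eta,\infty)$, define $g(x) = h_{\sigma(|x|)}(x)$, so that $g=h$ on $|x|\geq 2\eta$ and $g=Lx$ on $|x|\leq\eta$. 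A direct computation gives that $Dg(x)$ is $Dh(\sigma(|x|) x)$ plus a rank-one correction whose norm is controlled by $|\sigma'(|x|)|\cdot |Dh(y)y - h(y)|/\sigma(|x|)^{2}$ at $y=\sigma(|x|)x$. Taylor expansion at $0$ gives $|Dh(y)y - h(y)| = O(|y|^{2})$, so the correction is $O(\eta)$ on the transition annulus and vanishes as $\eta\to 0$. Hence for $\eta$ small $g$ is a $C^{k}$ diffeomorphism that equals $h$ outside a small ball and equals the linear map $L$ inside.

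The harder step is to replace $Lx$ by $x$ on a yet smaller ball. Since $GL^{+}(n,\R)$ is path-connected, there is a smooth path $L_{t}$ from $I$ to $L$. The naive interpolation $L_{\tau(|x|)} x$ yields a rank-one Jacobian correction of magnitude comparable to $|\tau'(|x|)|\cdot |x|\cdot \|L'_{\tau}\|$, and since $|\tau'|\cdot|x|$ is only $O(1)$ on the transition annulus regardless of the annulus' width, this correction does not shrink and need not produce a diffeomorphism. To circumvent this, decompose $L$ as $L = M_{N}\cdots M_{1}$ with each $M_{i}$ as close to $I$ as required, and apply the radial-scaling construction iteratively on nested shrinking annuli, at each step pre-composing the interior of $g$ by one $M_{i}^{-1}$. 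When each $M_{i}$ is close to $I$ the rank-one correction is as small as we wish, so all intermediate maps remain diffeomorphisms, and after $N$ steps $g$ equals the identity on a small ball.

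To finish, absorb the initial translation: if originally $h(0)\neq 0$, post-compose the construction applied to $h(\cdot)-h(0)$ with the map $x\mapsto x + \psi(|x|) h(0)$, where $\psi$ is a radial cutoff that is $0$ on the region where the construction has already made the map equal to the identity and $1$ near infinity. For a cutoff with transition scale much larger than $|h(0)|$, the rank-one Jacobian term is small and the composition remains a diffeomorphism. This yields the desired $\widetilde{h}$. The main obstacle is the linear-to-identity passage: its direct implementation fails because the rank-one correction cannot be made small by narrowing the transition, and this is precisely what forces the decomposition into small factors — relying on the orientation-preserving hypothesis through connectedness of $GL^{+}(n,\R)$.
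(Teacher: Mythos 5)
The paper does not prove this statement; it is cited verbatim from Munkres as reformulated by Wilson, so there is no internal proof to compare against. Your overall strategy --- linearize $h$ near $0$ by interpolating to $Dh(0)$, connect $Dh(0)$ to $I$ via a path in $GL^{+}(n,\R)$ broken into factors close to $I$, and absorb the constant $h(0)$ on a large outer annulus --- is the standard and correct outline, and the linear and translation steps are handled appropriately (including the observation that the rank-one Jacobian correction from $L_{\tau(|x|)}x$ does not shrink with the annulus, which forces the decomposition of $L$ into small factors).

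However, the first step has a genuine gap precisely at the regularity $k=1$ that the rest of the paper needs. Your estimate $|Dh(y)y - h(y)| = O(|y|^{2})$ requires $h \in C^{2}$; for $h \in C^{1}$ with $h(0)=0$ one only gets $|Dh(y)y - h(y)| = o(|y|)$, say $\leq |y|\,\omega(|y|)$ with $\omega$ the modulus of continuity of $Dh$. The rank-one correction to $Dg$ is then bounded by $\frac{|\sigma'(r)|\, r}{\sigma(r)}\,\omega(\sigma(r) r)$ on the transition annulus, and the factor $\sigma'/\sigma$ is \emph{not} integrable across any interval on which $\sigma$ vanishes: $\int_{\eta}^{2\eta}\frac{\sigma'}{\sigma}\,dr = +\infty$. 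One can check (e.g.\ take $\omega(t)\sim 1/\log(1/t)$) that no choice of cutoff $\sigma$ makes this correction uniformly small, so $g$ need not even be $C^{1}$ across $|x|=\eta$, let alone a diffeomorphism. Equivalently, the family $(s,x)\mapsto h(sx)/s$ (extended by $Lx$ at $s=0$) is only jointly $C^{k-1}$, which is merely $C^{0}$ when $k=1$, so composing with $s=\sigma(|x|)$ loses differentiability. The fix is standard and simpler than what you wrote: write $h(x)=Lx+r(x)$ with $r(x)=o(|x|)$ and $Dr(x)=o(1)$ as $x\to 0$, and set $g(x)=Lx+\phi(|x|/\eta)\,r(x)$ with $\phi$ a fixed cutoff vanishing on $[0,1]$ and equal to $1$ on $[2,\infty)$. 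Then $Dg(x)-L = \phi\,Dr(x) + \eta^{-1}\phi'\,r(x)\otimes\hat{x}$, and both terms are $o(1)$ uniformly on $|x|\leq 2\eta$ as $\eta\to 0$, since $|r(x)|=o(\eta)$ and $\|Dr(x)\|=o(1)$ there; for $\eta$ small $Dg$ is uniformly close to the fixed invertible $L$, which gives both local and global invertibility. With this replacement your argument is complete.
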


\begin{proof}[Proof of Lemma \ref{lemma1}]
Suppose that $f:S^{n} \rightarrow S^{n}$ is a $C^{1}$ diffeomorphism
with a fixed point in $S^{n}$. Identifying $S^{n}$ with
$\overline{\R^{n}}$, without loss of generality we can assume $f$ fixes the point at infinity. Then by Theorem \ref{thm1},
there exists a $C^{1}$ diffeomorphism $\widetilde{f}$ and real numbers $r_{1},r_{2}>0$ such that 
$\widetilde{f} \av _{ B_{\chi}(0,r_{1}) }$ is the identity and $\widetilde{f} \av _{ B_{\chi}(\infty, r_{2})}$ is equal to $f$.
We can then write
\begin{equation*}
f =  \left ( f \circ \widetilde{f}^{-1} \right ) \circ \widetilde{f}
\end{equation*}
where $f^{2}:= f \circ \widetilde{f}^{-1}$ is supported on the ball
$S^{n} \setminus B_{\chi}(\infty, r_{2}) $ and $f^{1}:=\widetilde{f}$ is supported on the ball $S^{n} \setminus B_{\chi}(0,r_{1})$. 
\end{proof}

\subsection{Bi-Lipschitz paths}

We shall postpone the proofs of the lemmas in this section until \S 3.
Let us now define the notion of a bi-Lipschitz path.

\begin{definition}
\label{bilippath}
Let $(X,d_{X})$ be a metric space.
A path $h:[0,1] \rightarrow LIP(X)$
is called a \emph{bi-Lipschitz path} if for every $\epsilon >0$, there exists $\delta >0$ such that if $s,t, \in [0,1]$ with $\av s-t \av < \delta$, the following two conditions hold:
\begin{enumerate}
\item for all $x \in X$, $d_{X}(h_{s} \circ h_{t}^{-1}(x),x) <\epsilon$;
\item we have that $h_{s} \circ h_{t}^{-1}$ is $(1+\epsilon)$-bi-Lipschitz with respect to $d_{X}$.
\end{enumerate}
\end{definition}

We need the following lemmas on bi-Lipschitz paths.

\begin{lemma}
\label{lemma10}
Let $h_{t}:[0,1] \rightarrow LIP(\R^{n})$ be a bi-Lipschitz path with respect to $d$.
Then $h_{t}:[0,1] \rightarrow LIP(S^{n})$ is a bi-Lipschitz path with respect to $\chi$.
\end{lemma}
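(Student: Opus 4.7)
The plan is to apply the $d$-bi-Lipschitz path hypothesis with a sufficiently small auxiliary parameter $\epsilon_{0}>0$ and then transfer the resulting Euclidean estimates to the spherical metric $\chi$ by means of the explicit formula for $\chi$. First I would extend each $h_{t}$ to a self-map of $S^{n}$ by setting $h_{t}(\infty)=\infty$; this extension is continuous since any bi-Lipschitz self-map of $\R^{n}$ sends $|x|\to\infty$ to $|h_{t}(x)|\to\infty$. A routine estimate of the ratio $(1+|x|^{2})/(1+|h_{t}(x)|^{2})$ in terms of the Euclidean bi-Lipschitz constant of $h_{t}$ then shows that the extension is itself bi-Lipschitz with respect to $\chi$, so $h_{t}\in LIP(S^{n})$.

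Given $\epsilon>0$, I would apply the hypothesis with $\epsilon_{0}$ (to be chosen at the end) to obtain $\delta>0$. For $s,t$ with $|s-t|<\delta$, set $h=h_{s}\circ h_{t}^{-1}$; then $h$ is $(1+\epsilon_{0})$-bi-Lipschitz with respect to $d$ and satisfies $|h(x)-x|<\epsilon_{0}$ for every $x\in\R^{n}$, with $h(\infty)=\infty$. Condition (i) of Definition \ref{bilippath} with respect to $\chi$ is then immediate, because $\chi(h(x),x)\leq |h(x)-x|<\epsilon_{0}$ on $\R^{n}$ and $\chi(h(\infty),\infty)=0$. For condition (ii) I would use the factorisation
\begin{equation*}
\frac{\chi(h(x),h(y))}{\chi(x,y)} \;=\; \frac{|h(x)-h(y)|}{|x-y|}\,\cdot\,\sqrt{\frac{1+|x|^{2}}{1+|h(x)|^{2}}}\,\cdot\,\sqrt{\frac{1+|y|^{2}}{1+|h(y)|^{2}}},
\end{equation*}
with the evident modification when one of $x,y$ is $\infty$ (one square-root factor then replaces the entire product). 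The first factor lies in $[1/(1+\epsilon_{0}),\,1+\epsilon_{0}]$, and the key estimate for each square-root factor is
\begin{equation*}
\bigl|\,|h(x)|^{2}-|x|^{2}\,\bigr| \;\leq\; \epsilon_{0}\bigl(2|x|+\epsilon_{0}\bigr),
\end{equation*}
which, combined with the elementary inequality $2|x|/(1+|x|^{2})\leq 1$, gives $\sqrt{(1+|x|^{2})/(1+|h(x)|^{2})}=1+O(\epsilon_{0})$ uniformly in $x$. Multiplying the three factors yields $\chi(h(x),h(y))/\chi(x,y)=1+O(\epsilon_{0})$, and choosing $\epsilon_{0}$ small enough at the outset then makes both (i) and (ii) hold with the prescribed $\epsilon$.

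The only delicate point is that the $\chi$-estimates must be uniform over $S^{n}$, including near $\infty$, where the Euclidean constraint $|h(x)-x|<\epsilon_{0}$ at first sight gives rather weak information relative to $\chi$. What saves the calculation is precisely the factor $1+|x|^{2}$ in the denominator of $\chi$: the bound $2|x|/(1+|x|^{2})\leq 1$ is the exact statement that the distortion introduced by comparing the two metrics is absorbed into an $O(\epsilon_{0})$ error term, and this is really the only substantive step in the argument.
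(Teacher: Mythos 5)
Your proposal is correct and follows essentially the same route as the paper: extend each $h_{t}$ by fixing $\infty$, use the auxiliary parameter trick to pull a small $\epsilon_{0}$ through the hypothesis, note that $\chi\leq d$ on $\R^{n}$ to get condition (i), and for condition (ii) use the exact same factorisation of $\chi(h(x),h(y))/\chi(x,y)$ into the Euclidean ratio times the two square-root correction factors, controlling each correction factor via $\bigl||h(x)|^{2}-|x|^{2}\bigr|\leq\epsilon_{0}(2|x|+\epsilon_{0})$ together with $2|x|/(1+|x|^{2})\leq 1$. Your closing remark about the $1+|x|^{2}$ denominator absorbing the distortion near $\infty$ is exactly the substance of the paper's display bounding $(1+|x|^{2})/(1+|g(x)|^{2})$.
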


\begin{lemma}
\label{lemma11}
Let $h_{t}:[0,1] \rightarrow LIP(\R^{n})$ be a bi-Lipschitz path with respect to $d$ and let $g:S^{n} \rightarrow S^{n}$ be a M\"{o}bius transformation. Then the path $g \circ h_{t} \circ g^{-1}$
is bi-Lipschitz with respect to $\chi$ on $S^{n}$.
\end{lemma}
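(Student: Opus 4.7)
The plan is to combine Lemma \ref{lemma10} with the fact that Möbius transformations are conformal with respect to the spherical metric. By Lemma \ref{lemma10}, the given path $h_t$ is already a bi-Lipschitz path on $S^n$ with respect to $\chi$: for every $\epsilon>0$ there exists $\delta>0$ such that whenever $|s-t|<\delta$, the map $\psi_{s,t}:=h_s\circ h_t^{-1}$ satisfies $\chi(\psi_{s,t}(x),x)<\epsilon$ and is $(1+\epsilon)$-bi-Lipschitz with respect to $\chi$. It therefore suffices to prove the following auxiliary statement: for any Möbius transformation $g$ and any $\eta>0$, there exists $\epsilon>0$ such that if $\phi:S^n\to S^n$ satisfies $\chi(\phi(x),x)<\epsilon$ and is $(1+\epsilon)$-bi-Lipschitz, then $\Phi:=g\circ\phi\circ g^{-1}$ satisfies $\chi(\Phi(y),y)<\eta$ and is $(1+\eta)$-bi-Lipschitz. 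Applying this with $\phi=\psi_{s,t}$ then gives the conclusion.

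The closeness-to-identity property is immediate: since $g$ is a diffeomorphism of the compact sphere it is uniformly continuous, so choosing $\epsilon$ small enough that $\chi(u,v)<\epsilon$ implies $\chi(g(u),g(v))<\eta$, and applying this at $u=\phi(g^{-1}(y))$, $v=g^{-1}(y)$, gives $\chi(\Phi(y),y)<\eta$.

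The main task is the $(1+\eta)$-bi-Lipschitz bound, and here a naive estimate will not suffice. Indeed, if one only uses that $g$ is $K$-bi-Lipschitz on $(S^n,\chi)$ for some $K>1$, the best one obtains is $K^2(1+\epsilon)$, which stays away from $1$. The crucial extra property is conformality: the differential of $g$ at each $p\in S^n$ equals $\lambda_g(p)R(p)$, where $\lambda_g(p)>0$ is a smooth, strictly positive function on the compact sphere and $R(p)$ is an orthogonal linear map. For $y_1,y_2\in S^n$ and $x_i:=g^{-1}(y_i)$, the ratio
\[
\frac{\chi(g\phi(x_1),g\phi(x_2))}{\chi(g(x_1),g(x_2))}
\]
is, to leading order as $\chi(x_1,x_2)\to 0$, the product $\bigl(\lambda_g(\phi(x_1))/\lambda_g(x_1)\bigr)$ times the local bi-Lipschitz ratio of $\phi$ at $(x_1,x_2)$. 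Since $\phi$ is $(1+\epsilon)$-bi-Lipschitz and moves points by at most $\epsilon$, and $\lambda_g$ is uniformly continuous on $S^n$, each factor is $1+o(1)$ as $\epsilon\to 0$.

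To promote the infinitesimal comparison to a global bi-Lipschitz bound I would connect $y_1$ and $y_2$ by a $\chi$-geodesic, subdivide it into short arcs on which the local conformal estimate is valid, and combine them multiplicatively; alternatively, one can use that the bi-Lipschitz map $\phi$ is differentiable almost everywhere by Rademacher's theorem and integrate image-arclength via the chain rule applied to $g\circ\phi\circ g^{-1}$. I expect the main obstacle to be precisely this globalisation step: turning a pointwise conformal comparison into a uniform $(1+\eta)$-bound without accumulating multiplicative error, which will require compactness of $S^n$ and the modulus of continuity of $\lambda_g$ in order to absorb the $\epsilon$-perturbation introduced by $\phi$.
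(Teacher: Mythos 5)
Your strategy is genuinely different from the paper's, and it leaves a real gap at exactly the point you flag. You reduce to conjugating a single near-identity, near-isometric map $\phi$ by $g$, and you observe that the needed improvement over the naive $K^2(1+\epsilon)$ bound must come from the conformality of $g$, via the factor $\lambda_g(\phi(x))/\lambda_g(x)\to 1$. That diagnosis is correct, but the argument as written stops at a pointwise/infinitesimal comparison. Promoting it to a global $(1+\eta)$-bi-Lipschitz bound genuinely requires work: Rademacher plus a chain rule a.e., then an integration-along-curves argument, together with the fact that a map whose a.e.\ operator norms of $D\Phi$ and $D\Phi^{-1}$ are $\le 1+\eta$ is globally $(1+\eta)$-bi-Lipschitz. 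None of this is carried out. There is also a quieter issue you do not address: the $\chi$ used in the paper is the \emph{chordal} metric, not the Riemannian spherical metric, and M\"obius maps are conformal for the latter; the two are infinitesimally equal but you would need to say so (and handle the non-geodesic nature of chords) to justify the leading-order computation and the subdivision step. So as it stands the proposal is an outline with an acknowledged hole, not a proof.

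The paper sidesteps all of this with a cleaner reduction: write $g=C\circ B$, where $C$ is a spherical isometry with $C(\infty)=g(\infty)$ and $B=C^{-1}\circ g$ is then a M\"obius map fixing $\infty$, hence a Euclidean similarity. Conjugation by $B$ preserves bi-Lipschitz paths for $d$ by a one-line computation (a similarity scales $d$ by a constant $\alpha$, so condition (i) holds with $\alpha\epsilon$ and condition (ii) with the same $\epsilon$); Lemma \ref{lemma10} then upgrades this to $\chi$; and conjugation by the spherical isometry $C$ trivially preserves both conditions for $\chi$. This avoids Rademacher, a.e.\ differentiation, the chordal-versus-Riemannian subtlety, and the multiplicative globalisation you were worried about. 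If you want to salvage your approach, the key missing ingredient is a lemma of the form: if $\Phi$ is Lipschitz and $\|D_y\Phi\|\le 1+\eta$, $\|D_y\Phi^{-1}\|\le 1+\eta$ for a.e.\ $y$, then $\Phi$ is $(1+\eta)$-bi-Lipschitz for the induced length metric, together with an explicit comparison of $\chi$ with the spherical length metric. But the factorisation $g=C\circ B$ is substantially shorter and is what the paper does.
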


\begin{remark}
It can be shown that a bi-Lipschitz path $h_{t}:[0,1] \rightarrow LIP(M)$ on a closed manifold $M$ remains bi-Lipschitz after conjugation by a conformal map $g:M \rightarrow M$. The condition that $g$ is conformal cannot be weakened to $g$ being a diffeomorphism.
\end{remark}

The following lemma is the main step in the proof of Theorem \ref{mainthm}.

\begin{lemma}
\label{thm2}
Let $f:\R^{n} \rightarrow \R^{n}$ be a $C^{1}$ diffeomorphism supported in $B_{d}(0,1/3)$.
Then there exists a path $h_{t}:[0,1] \rightarrow LIP(\R^{n})$ which is bi-Lipschitz with respect to $d$, connecting the identity $h_{0}$ and $h_{1}=f$.
\end{lemma}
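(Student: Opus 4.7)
The plan is to reduce to the case where $f$ is a $C^1$-small perturbation of the identity, and then to concatenate straight-line homotopies. If $g:\R^n \rightarrow \R^n$ is a compactly supported $C^1$ diffeomorphism with $\|Dg-I\|_\infty < \eta$ for $\eta$ small, then the straight-line homotopy $h^g_t(x) := (1-t)x + tg(x)$ is a bi-Lipschitz path from $\mathrm{id}$ to $g$. Indeed $Dh^g_t = I + t(Dg-I)$ lies within $\eta$ of $I$, and a direct computation gives
\[
D\bigl(h^g_s\circ (h^g_t)^{-1}\bigr)(x) \;=\; I + (s-t)(Dg(y)-I)\bigl(I + t(Dg(y)-I)\bigr)^{-1},
\]
with $y = (h^g_t)^{-1}(x)$, which differs from $I$ by $O(|s-t|\eta)$; together with the displacement bound $|h^g_s(y) - h^g_t(y)| \leq |s-t|\cdot|g(y) - y|$, both clauses of Definition~\ref{bilippath} are verified.

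Next I would build a continuous family $\{f_s\}_{s\in[0,1]}$ of compactly supported diffeomorphisms with $f_0=\mathrm{id}$ and $f_1=f$ such that $(s,x)\mapsto Df_s(x)$ is uniformly continuous on the common compact support. By uniform continuity, one may pick a partition $0=s_0 < \cdots < s_N = 1$ so fine that each $g_k := f_{s_k}\circ f_{s_{k-1}}^{-1}$ satisfies $\|Dg_k - I\|_\infty < \eta$, giving the factorisation $f = g_N \circ \cdots \circ g_1$ with each $g_k$ in the regime of the first step. Concatenate the straight-line paths $h^{g_k}$ as
\[
H_t \;:=\; h^{g_k}_{Nt - k + 1} \circ g_{k-1} \circ \cdots \circ g_1, \qquad t \in [(k-1)/N,\, k/N].
\]
Within a single subinterval the right factor $g_{k-1}\circ\cdots\circ g_1$ cancels in $H_s\circ H_t^{-1}$, so the bi-Lipschitz path condition is inherited verbatim from $h^{g_k}$; across joints one notes $h^{g_k}_1 = g_k$ matches the beginning of the next piece and each $g_k$ is close to $\mathrm{id}$, so the condition persists by a short computation in the spirit of the first paragraph.

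The main obstacle is constructing the isotopy $\{f_s\}$: a compactly supported $C^1$ diffeomorphism of $\R^n$ need not be $C^1$-isotopic to the identity, since $\pi_0$ of the relevant diffeomorphism group is related to groups of exotic spheres. The key point is that we need only a bi-Lipschitz path (Definition~\ref{bilippath}), not a smooth isotopy, so the intermediate maps $f_s$ may be merely bi-Lipschitz. The Alexander-type isotopy $f_s(x) = sf(x/s)$ with $f_0:=\mathrm{id}$ is a topological isotopy through bi-Lipschitz maps; restricted to $[s_\ast, 1]$ with $s_\ast > 0$, the derivatives $Df_s(x) = Df(x/s)$ are uniformly continuous in $(s,x)$ on the common support, and the partition argument applies. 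What remains is bridging $[0,s_\ast]$: since $Df_s(0) = Df(0)$ does not tend to $I$, no $C^1$-continuous bridge exists, but $f_{s_\ast}$ has support of diameter $\leq 2s_\ast/3$ and tiny $C^0$-displacement, leaving just enough room to hand-craft a bi-Lipschitz (non-$C^1$) interpolation between $\mathrm{id}$ and $f_{s_\ast}$ that satisfies Definition~\ref{bilippath}. Designing this endpoint bridge, and controlling its bi-Lipschitz constant independently of $L = \mathrm{Lip}(f)$, is where the real technical work of the lemma lies.
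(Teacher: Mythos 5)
Your opening reduction is sound: for $g$ with $\|Dg - I\|_\infty < \eta$ small, the straight-line homotopy $h^g_t(x) = (1-t)x + tg(x)$ is a bi-Lipschitz path, and if one had a family $\{f_s\}$ with $(s,x)\mapsto Df_s(x)$ uniformly continuous, the partition-and-concatenate argument would go through. But the proposal does not in fact produce such a family, and the gap you flag at the end is not a technicality you can defer --- it is the whole problem, re-encountered.

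Concretely, the Alexander family $f_s(x) = s\,f(x/s)$ is scale-invariant at the level of derivatives: $Df_s(x) = Df(x/s)$, so the set of matrices $\{Df_s(x) : x\in\R^n\}$ is \emph{identical} for every $s>0$. Shrinking $s_\ast$ shrinks the support and the $C^0$-displacement of $f_{s_\ast}$, but not its $C^1$-distortion, nor its bi-Lipschitz constant, nor the modulus of continuity of its derivative. Moreover, if $h_t$ were any bi-Lipschitz path from $\mathrm{id}$ to $f_{s_\ast}$, then $\tilde h_t := D_{1/s_\ast}\circ h_t\circ D_{s_\ast}$ (conjugation by the dilation $D_{s_\ast}(x)=s_\ast x$) would be a bi-Lipschitz path from $\mathrm{id}$ to $f$; condition (ii) of Definition~\ref{bilippath} is preserved exactly, and condition (i) is preserved up to a factor $1/s_\ast$ which is harmless since $\epsilon$ is arbitrary. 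So the ``bridge over $[0,s_\ast]$'' is, up to rescaling, precisely Lemma~\ref{thm2} itself. The proposal is circular at exactly the point it labels as ``the real technical work.''

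The paper avoids any isotopy of $f$ to the identity (smooth or otherwise) and sidesteps the Milnor obstruction by a different device: a commutator with translations. Let $A_t$ be translation by $t\,e_1$, let $B_m = A_m(B_d(0,1/3))$, and define $g$ to equal $A_m\circ f\circ A_m^{-1}$ on each $B_m$, $m\in\N$, and the identity elsewhere --- a ``propagated'' copy of $f$ repeated along a line. Then set $h_t = g^{-1}\circ A_t^{-1}\circ g\circ A_t$. One checks $h_0 = \mathrm{id}$ and $h_1 = f$, and because $g$ is $C^1$ with $\|Dg\|$ uniformly bounded and $Dg$ uniformly continuous (it is locally a translate of $f$), the derivative estimates for $h_s\circ h_t^{-1}$ (Lemmas~\ref{lemma6}, \ref{lemma7}) show it is a bi-Lipschitz path. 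No small-perturbation or partitioning argument is needed, and the path is built in one stroke rather than by concatenation. If you want to salvage your approach, you would need to supply exactly such a construction for the endpoint bridge --- at which point you might as well use it globally, as the paper does.
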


\subsection{Proofs of the main results}

Assuming the intermediate results above, the proof of Theorem \ref{mainthm}
proceeds as follows.

\begin{proof}[Proof of Theorem \ref{mainthm}]
Let $f:S^{n} \rightarrow S^{n}$ be a $C^{1}$ diffeomorphism. 
There exists $A \in SO(n)$ such that $A \circ f$ has a fixed point in $S^{n}$. Note that if $n$ is even, then $f$ automatically has a fixed point and we can take $A$ to be the identity.

By Lemma
\ref{lemma1}, we can write $A \circ f=f^{2}\circ f^{1}$ where $f^{i}$ is supported on the spherical ball $B_{i}$ for $i=1,2$.
By standard spherical geometry, see e.g. \cite{V}, for $i=1,2$, there exist M\"{o}bius transformations
$g_{i}$ such that $g_{i}^{-1} \circ f^{i} \circ g_{i}$ is supported on
$B_{d}(0,1/3)$. 

Now, applying Lemma \ref{thm2} to $g_{i}^{-1} \circ f^{i} \circ g_{i}$, we obtain
two bi-Lipschitz paths $h_{t}^{i}$, for $i=1,2$, with respect to $d$ on $\R^{n}$.
Consider the paths
\begin{equation*}
p_{t}^{i} = g_{i} \circ h_{t}^{i} \circ g_{i}^{-1} 
\end{equation*}
for $i=1,2$, where $p_{0}^{i}$ is the identity and $p_{1}^{i} = f^{i}$.

It follows by Lemma \ref{lemma11} that $p_{t}^{i}$ is bi-Lipschitz with respect to $\chi$ on $S^{n}$.
Then $p_{t}^{2} \circ p_{t}^{1}$ is a composition of bi-Lipschitz paths,
with respect to $\chi$,
connecting the identity and $A \circ f$. Since $A^{-1} \in SO(n)$, there is a bi-Lipschitz path $A_{t}$ connecting the identity $A_{0}$ and $A_{1} = A^{-1}$. We conclude
that $A_{t} \circ p_{t}^{2} \circ p_{t}^{1}$ is a composition of three bi-Lipschitz paths, which connects the identity and $f$. This completes the proof.
\end{proof}

\begin{proof}[Proof of Theorem \ref{maincor}]
Let $\epsilon >0$. 
By Theorem \ref{mainthm}, $A_{t},p^{1}_{t}$ and $p^{2}_{t}$ are all bi-Lipschitz paths with respect to $\chi$ on $S^{n}$,  
$A_{0} \circ p_{0}^{2} \circ p_{0}^{1}$ is the identity and
$A_{1} \circ p_{1}^{2} \circ p_{1}^{1}=f$.

Given a bi-Lipschitz path $h_{t}$, we can choose
$0=t_{1}<t_{2}<\ldots < t_{j+1}=1$ such that $g_{k} = h_{k+1} \circ h_{k}^{-1}$ is $(1+\epsilon)$-bi-Lipschitz for $k=1,\ldots, j$ and
$h_{1}= g_{j} \circ  \ldots \circ g_{1}$.
Applying this observation to the bi-Lipschitz paths $A_{t},p^{1}_{t}$ and $p^{2}_{t}$, there exists $j(1),j(2),j(3) \in\N$ such that
\begin{align*}
A_{1} &= A_{1,j(1)} \circ A_{1,j(1)-1} \circ \ldots \circ A_{1,1},\\
p_{1}^{1} &= p_{1,j(2)}^{1} \circ p_{1,j(2)-1}^{1} \circ \ldots \circ p_{1,1}^{1},\\
p_{1}^{2} &= p_{1,j(3)}^{2} \circ p_{1,j(3)-1}^{2} \circ \ldots \circ p_{1,1}^{2},
\end{align*}
and each map in these three decompositions is $(1+\epsilon)$-bi-Lipschitz with respect to $\chi$, and also only moves points in $S^{n}$ by at most spherical distance $\epsilon$. In view of $A_{1} \circ p_{1}^{2} \circ p_{1}^{1}=f$, this proves the theorem with $m=j(1)+j(2)+j(3)$.
\end{proof}

\section{Proofs of the Lemmas}

We will prove Lemma \ref{lemma10} and Lemma \ref{lemma11} first, before proving the main Lemma \ref{thm2}.

\subsection{Proof of Lemma \ref{lemma10}}

Let $h_{t}:\R^{n} \rightarrow \R^{n}$ be a bi-Lipschitz path with respect to $d$. Then each $h_{t}$
extends to a mapping $S^{n} \rightarrow S^{n}$ which fixes the point at infinity.
Let $s,t \in [0,1]$ and consider the mapping $g= h_{s} \circ h_{t}^{-1}$.
Since $h_{t}$ is a bi-Lipschitz path, choose $\delta>0$ small enough so that if $\av s-t \av < \delta$ then
$d(g(x),x) < \epsilon$ for all $x \in \R^{n}$ and $g$ is $(1+\epsilon)$-bi-Lipschitz
with respect to $d$.

Property (i) of Definition \ref{bilippath} is satisfied for $\chi$ since $\chi(g(x),x) \leq d(g(x),x)$, for $x \in \R^{n}$, and $g$ fixes the point at infinity.

We now show that $h_{t}$ satisfies property (ii) of Definition \ref{bilippath}. The fact that $h_{t}$ is a bi-Lipschitz path with respect to $d$ and the formula for the spherical distance give
\begin{align}
\label{l2eq1}
\chi(g(x),g(y)) &= \frac{ \av g(x) - g(y) \av }{\sqrt{1+\av g(x) \av ^{2}}\sqrt{1+ \av g(y) \av ^{2}}}   
\notag \\ &\leq \frac{ (1+\epsilon) \av x- y \av}{\sqrt{1+\av g(x) \av ^{2}}\sqrt{1+ \av g(y) \av ^{2}}} 
\notag \\ &= (1+\epsilon)\chi(x,y) \left ( \frac{ 1+ \av x \av ^{2}}{1 + \av g(x) \av ^{2}} \right )^{1/2}
\left ( \frac{ 1+ \av y \av ^{2}}{1 + \av g(y) \av ^{2}} \right )^{1/2},
\end{align}
for $x,y \in \R^{n}$.
Since $d(g(x),x) < \epsilon$, it follows that
\begin{equation*}
\frac{1+\av x \av ^{2}}{1+ (\av x \av +\epsilon  ) ^{2}} \leq
\frac{1+\av x \av^{2}}{1+\av g(x) \av^{2}} \leq \frac{1+\av x \av ^{2}}{1+ (\av x \av -\epsilon ) ^{2}}.
\end{equation*}
Therefore,
\begin{equation*}
\left ( 1 + \frac{\epsilon(\epsilon + 2 \av x \av)}{1+\av x \av^{2}} \right)^{-1} 
\leq \frac{ 1+ \av x \av ^{2}}{1 + \av g(x) \av ^{2}}
\leq \left ( 1 + \frac{\epsilon(\epsilon - 2 \av x \av)}{1+\av x \av^{2}} \right)^{-1}
\end{equation*}
and so it follows that given $\epsilon>0$, we can choose $\epsilon'$ small enough so that
\begin{equation}
\label{l2eq2}
\frac{1}{1+\epsilon'} \leq  \frac{1+\av x \av^{2}}{1+ \av g(x) \av ^{2}} \leq 1+\epsilon'
\end{equation}
for all $x \in \R^{n}$. By (\ref{l2eq1}) and (\ref{l2eq2}), it follows that 
\begin{equation}
\label{l2eq4}
\chi(g(x),g(y)) \leq (1+\epsilon)(1+\epsilon')\chi(x,y),
\end{equation}
for all $x,y \in \R^{n}$. We can conclude that given $\epsilon >0$,
we can choose $\xi >0$ small enough so that 
\begin{equation}
\label{l2eq3}
\chi(g(x),g(y)) \leq (1+\xi) \chi(x,y)
\end{equation}
for all $x,y \in \R^{n}$. The reverse inequality follows by applying (\ref{l2eq3}) to $g^{-1}$. Therefore condition (ii) of Definition \ref{bilippath} holds for $x,y \in \R^{n}$ with $\delta$, and $\xi$ playing the role of $\epsilon$.

Finally, if $x \in \R^{n}$ and $y= \infty$, then
\begin{equation*}
\chi(g(x),\infty) = \frac{1}{\sqrt{1+\av g(x) \av^{2}}} = \chi(x,\infty) \left (\frac{1+\av x \av^{2}}{1+\av g(x) \av ^{2}}\right) ^{1/2}
\end{equation*}
and we then apply (\ref{l2eq2}) as above. This completes the proof of Lemma \ref{lemma10}.

\subsection{Proof of Lemma \ref{lemma11}}

Recall that $h_{t}$ is a bi-Lipschitz path with respect to $d$ on $\R^{n}$ and that $g:S^{n} \rightarrow S^{n}$ is a M\"{o}bius transformation. We can write
\begin{equation*}
g = C \circ B,
\end{equation*}
where $B:\R^{n} \rightarrow \R^{n}$ is an affine map and $C$
is a spherical isometry. To see this, let $x\in S^n$ be the point
such that $g(\infty)=x$. Then there exists a (non-unique)
spherical isometry $C$ such that $C(\infty)=x$ and then the map $B=C^{-1} \circ g$ is affine.

We first show that $B \circ h_{t} \circ B^{-1}$ is a bi-Lipschitz path with respect to $d$ on $\R^{n}$.
Since $B:\R^{n} \rightarrow \R^{n}$ is an affine map, there is a real number $\alpha >0$ such that
\begin{equation*}
d(B(x),B(y)) = \alpha d(x,y),
\end{equation*}
for all $x,y \in \R^{n}$. Since $h_{t}$ is a bi-Lipschitz path
with respect to $d$, write $f = h_{s} \circ h_{t}^{-1}$, 
with $\av s-t \av < \delta$ small enough so that $d(f(x),x) < \epsilon$ and $f$ is $(1+\epsilon)$-bi-Lipschitz with respect to $d$. Then
\begin{align*}
d(B(f(B^{-1}(x))),x) &= d( B(f(B^{-1}(x))),B(B^{-1}(x)) \\
&\leq \alpha d(f(B^{-1}(x)),B^{-1}(x)) \\
&< \alpha \epsilon,
\end{align*}
for all $x \in \R^{n}$. Therefore $B \circ h_{t} \circ B^{-1}$
satisfies condition (i) of Definition \ref{bilippath} with $\delta$ and $\alpha \epsilon$. Next,
\begin{align*}
d(B(f(B^{-1}(x))),B(f(B^{-1}(y)))) &= \alpha d(f(B^{-1}(x)),f(B^{-1}(y))) \\
&\leq \alpha (1+\epsilon) d(B^{-1}(x),B^{-1}(y)) \\
&= (1+\epsilon) d(x,y)
\end{align*}
and so $B \circ h_{t} \circ B^{-1}$
satisfies condition (ii) of Definition \ref{bilippath} with $\delta$ and $\epsilon$.

By Lemma \ref{lemma10}, $B \circ h_{t} \circ B^{-1}$ is also bi-Lipschitz with respect to $\chi$ on $S^{n}$.
It remains to show that $C \circ B \circ h_{t} \circ B^{-1} \circ C^{-1}= g \circ h_{t} \circ g^{-1}$ is a bi-Lipschitz path with respect to $\chi$ on $S^{n}$.

Since $B \circ h_{t} \circ B^{-1}$ is a bi-Lipschitz path with respect to $\chi$,
write $f = B \circ h_{s} \circ h_{t}^{-1} \circ B^{-1}$, 
with $\av s-t \av < \delta$ small enough so that $\chi(f(x),x) \leq \epsilon$ and $f$ is $(1+\epsilon)$-bi-Lipschitz with respect to $\chi$.
Then
\begin{align*}
\chi (C(f(C^{-1}(x))),x) &= \chi ( C(f(C^{-1}(x))),C(C^{-1}(x)))\\
&= \chi (f(C^{-1}(x)),C^{-1}(x)) \\
&< \epsilon,
\end{align*}
for all $x \in S^{n}$. Therefore $C \circ B \circ h_{t} \circ B^{-1} \circ C^{-1}$
satisfies condition (i) of Definition \ref{bilippath} with $\delta$ and $\epsilon$. Next,
\begin{align*}
\chi ( C(f(C^{-1}(x))), C(f(C^{-1}(y))) ) &= \chi (f(C^{-1}(x)), f(C^{-1}(y)) )\\
&\leq (1+\epsilon) \chi ( C^{-1}(x),C^{-1}(y))\\
&= (1+\epsilon)\chi (x,y),
\end{align*}
and so $C \circ B \circ h_{t} \circ B^{-1} \circ C^{-1}$
satisfies condition (ii) of Definition \ref{bilippath} with $\delta$ and $\epsilon$. This completes the proof.

\subsection{Proof of Lemma \ref{thm2}}

We first set some notation. If $g:\R^{n} \rightarrow \R^{n}$ is differentiable at $x \in \R^{n}$,
write $D_{x}g$ for the derivative of $g$ at $x$ and let
\begin{equation*}
\av \av D_{x}g \av \av = \max_{y \in \R^{n} \setminus \{0 \} } \frac{ \av (D_{x}g)(y) \av}{\av y \av} 
\end{equation*}
be the operator norm of the linear map
$D_{x}g$. Note that we are regarding the derivative here as a mapping from
$\R^{n}$ to $\R^{n}$ given by the matrix of partial derivatives $\partial g_{i}/ \partial x_{j}$, and not as a mapping between tangent spaces.

Recall that $f:\R^{n} \rightarrow \R^{n}$ is a $C^{1}$ diffeomorphism supported on the 
ball $B_{0}:=B_{d}(0,1/3)$. Write $A_{t}:\R^{n} \rightarrow \R^{n}$ for the translation $A_{t}(x_{1},x_{2},\ldots,x_{n}) = (x_{1}+t,x_{2},\ldots,x_{n})$
and define $B_{t} = A_{t}(B_{0})$. Write $e_{1}=(1,0,\ldots,0)$.

Define $g:\R^{n} \rightarrow \R^{n}$ by
\begin{equation*}
g(x) = \left\{ \begin{array}{cl}
 (A_{m} \circ f \circ A_{m}^{-1})(x) &\mbox{ if $x \in B_{m}$, \:\:\: $m \in \N$,} \\
  x &\mbox{ otherwise.}
       \end{array} \right.
\end{equation*}
Then $g$ is a propagated version of $f$, supported in $\cup _{m=1}^{\infty} B_{m}$. We can extend $g$ to a mapping on $S^{n}$ by defining $g$ to fix the point at infinity.

\begin{lemma}
\label{lemma3}
The map $g$ is $C^{1}$ on $\R^{n}$ and, further, satisfies the following properties:
\begin{enumerate}
\item $g$ is uniformly continuous on $\R^{n}$, that is, for all $\epsilon >0$,
there exists $\delta >0$ such that for all $x,y \in \R^{n}$ satisfying $\av x-y \av < \delta$,
we have $\av g(x)-g(y) \av <\epsilon$;
\item there exists $T>0$ such that 
\begin{equation}
\label{l3eq1}
\av \av D_{x}g\av \av \leq T 
\end{equation}
for all $x \in \R^{n}$;
\item there exists a function $\eta :[0,\infty]\rightarrow [0,\infty]$ for which $\eta(0)=0$, $\eta$ is continuous at $0$ and 
\begin{equation}
\label{l3eq2}
\av \av D_{x}g - D_{y}g \av \av \leq \eta (\av x-y\av)
\end{equation}
for all $x,y \in \R^{n}$. The function $\eta$ is the modulus of continuity of $Dg$.
\end{enumerate}
Further, we may assume that $g^{-1}$ also satisfies these three conditions, by changing the constants and modulus of continuity if necessary.
\end{lemma}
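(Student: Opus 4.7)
Three basic observations drive everything. First, the balls $B_m = B_d(m e_1, 1/3)$ are pairwise disjoint, with $\mathrm{dist}(B_m, B_k) \geq 1/3$ for $m \neq k$. Second, because $f$ equals the identity on $\R^n \setminus B_0$ and $Df$ is continuous, we have $D_x f = I$ for every $x$ in the complement of $\mathrm{int}(B_0)$, including on $\partial B_0$. Third, as a diffeomorphism fixing the complement of $B_0$ pointwise, $f$ maps $B_0$ bijectively onto itself. Together these facts make $g$ well-defined and continuous, guarantee $g(B_m) = B_m$, and force $g$ to equal the identity outside $\bigcup_m B_m$.

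For the $C^1$ claim, $g$ is manifestly $C^1$ on each open ball $\mathrm{int}(B_m)$ and on the complement of $\bigcup_m B_m$. The only concern is at boundary points $x \in \partial B_m$; there both defining expressions for $g$ give $g(x) = x$, and after conjugating by the translation $A_m$ the second observation gives $D_x g = I$ from either side, so $Dg$ extends continuously and $g$ is $C^1$ globally. For (ii), $f$ is $C^1$ with $Df = I$ outside the compact set $B_0$, so $\av\av D_x f\av\av$ is bounded by some $T \geq 1$ on $\R^n$; translations preserve the derivative, so the same $T$ bounds $\av\av D_x g\av\av$. Since $g$ is then globally $T$-Lipschitz, it is uniformly continuous, which gives (i).

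For (iii), let $\eta_f$ be a monotone modulus of continuity for $Df$ on $\R^n$; it exists because $Df$ is continuous and equals $I$ outside the compact set $B_0$. I handle three cases for $x, y \in \R^n$. If both lie in a common $B_m$ or both lie outside $\bigcup_m B_m$, translation invariance (respectively $Dg \equiv I$) yields $\av\av D_x g - D_y g\av\av \leq \eta_f(\av x - y \av)$ directly. If $x \in B_m$ and $y$ lies outside $\bigcup_m B_m$, the segment $[x,y]$ exits $B_m$ at some $z \in \partial B_m$ where $D_z g = I = D_y g$, and monotonicity of $\eta_f$ gives $\av\av D_x g - D_y g\av\av \leq \eta_f(\av x - z\av) \leq \eta_f(\av x - y \av)$. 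If $x \in B_m$ and $y \in B_{m'}$ with $m \neq m'$, then $\av x - y \av \geq 1/3$ and the crude bound $2T$ from (ii) applies. Setting $\eta(r) = \eta_f(r)$ for $r < 1/3$ and $\eta(r) = 2T$ for $r \geq 1/3$ packages all three bounds into a single modulus with $\eta(0) = 0$ and $\eta$ continuous at $0$.

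The claims for $g^{-1}$ follow by applying the same argument to the propagated version of $f^{-1}$, which is again a $C^1$ diffeomorphism supported on $B_0$ (since $f = \mathrm{id}$ outside $B_0$ forces $f^{-1} = \mathrm{id}$ outside $B_0$ as well). The main obstacle here is not conceptual but organizational: carefully verifying the boundary matching needed for $C^1$-regularity, and packaging the case analysis in (iii) into a single monotone modulus of continuity.
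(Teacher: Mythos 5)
Your proof is correct and takes essentially the same route as the paper, which argues simply that $g$ is a propagated (translated) version of $f$ and therefore inherits properties (i)--(iii). Your version is substantially more careful: in particular, the piecewise definition of the modulus $\eta$ (using the separation $\operatorname{dist}(B_m,B_{m'})\geq 1/3$ to fall back on the crude bound $2T$ for $\av x-y\av\geq 1/3$, and $\eta_f$ for $\av x-y\av<1/3$) addresses a genuine looseness in the paper's assertion that $g$ satisfies (iii) ``with the same constants as $f$'': when $x\in B_m$ and $y\in B_{m'}$ with $m\neq m'$, the derivatives $D_xg$ and $D_yg$ come from $Df$ at points whose separation is not $\av x-y\av$, so $\eta_f$ itself need not be a modulus of continuity for $Dg$ without the case split you supply. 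Your boundary-matching argument for the $C^1$ claim (using $Df=I$ on $\partial B_0$, which follows from $f=\mathrm{id}$ outside $B_0$ plus continuity of $Df$) is likewise a detail the paper leaves implicit but is needed since $g$ is defined by cases.
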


\begin{proof}
First note that $f$ is $C^{1}$ by hypothesis, and satisfies the three claims of the lemma because it is supported in a compact subset of $\R^{n}$.
Since $g$ is a propagated version of $f$, it satisfies the three claims of the lemma
with the same constants as $f$.
The last claim follows since $f^{-1}$ is also $C^{1}$, and $g^{-1}$ is a propagated version of $f^{-1}$.
\end{proof} 

\begin{definition}
For $t \in [0,1]$, let
\begin{equation*}
h_{t} = g^{-1} \circ A_{t}^{-1} \circ g \circ A_{t}.
\end{equation*}
\end{definition}
By Lemma \ref{lemma3} and \cite[Lemma 1.54]{V}, which says that Euclidean translations in $\R^{n}$ are bi-Lipschitz with respect to $\chi$, $h_{t}$ is bi-Lipschitz with respect to both $d$ and $\chi$.
The following lemma is elementary.

\begin{lemma}
\label{lemma9}
We have that $h_{0}$ is equal to the identity and $h_{1} =f$. 
\end{lemma}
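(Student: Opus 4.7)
The plan is to verify both equalities in Lemma \ref{lemma9} by direct computation, using the explicit piecewise definition of $g$ and the fact that the translates $B_m = A_m(B_0)$ of the supporting ball are pairwise disjoint (since their centers $me_1$ are spaced at distance $1$ apart, while each has radius $1/3$).

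For $h_0 = \mathrm{id}$, there is nothing to do: $A_0$ is the identity translation, so
\begin{equation*}
h_0 = g^{-1} \circ A_0^{-1} \circ g \circ A_0 = g^{-1} \circ g = \mathrm{id}.
\end{equation*}

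For $h_1 = f$, the plan is to partition $S^n$ into four pieces and trace a point $x$ through the composition $g^{-1} \circ A_1^{-1} \circ g \circ A_1$ in each case, exploiting the identity $A_1 \circ A_m = A_{m+1}$ and the fact that $g$ restricted to $B_m$ (for $m \geq 1$) is exactly $A_m \circ f \circ A_m^{-1}$, so $g^{-1}$ restricted to $B_m$ equals $A_m \circ f^{-1} \circ A_m^{-1}$. In the first case $x \in B_0$: then $A_1(x) \in B_1$, so $g(A_1(x)) = A_1(f(x))$, then $A_1^{-1}$ cancels to give $f(x) \in B_0$, and since $g^{-1}$ is the identity on $B_0$ (as $B_0$ is disjoint from every $B_m$ with $m \geq 1$) we get $h_1(x) = f(x)$. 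In the second case $x \in B_m$ for some $m \geq 1$: then $A_1(x) \in B_{m+1}$, so $g(A_1(x)) = A_{m+1}(f(A_{-m}(x)))$, after applying $A_1^{-1}$ we land in $B_m$, and then $g^{-1}$ there yields $A_m \circ A_{-m}(x) = x$, which equals $f(x)$ because $x \notin B_0$ and $f$ is supported in $B_0$. In the third case $x \notin \bigcup_{m \geq 0} B_m$: then $A_1(x)$ lies either outside all $B_m$ with $m \geq 1$ or in $B_0$, and $g$ is the identity in both places, so $A_1^{-1} \circ g \circ A_1$ fixes $x$ and then $g^{-1}(x) = x = f(x)$ for the same reason.

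Finally, for $x = \infty$, the maps $A_1$ and $g$ (as extended to $S^n$) each fix $\infty$, so $h_1(\infty) = \infty = f(\infty)$. Assembling the four cases shows $h_1 \equiv f$ on all of $S^n$. There is no real obstacle here; the only thing to be careful about is distinguishing the role of $B_0$ (on which $g$ is the identity, because propagation is indexed by $m \geq 1$) from the $B_m$ with $m \geq 1$ (on which $g$ is the genuine conjugated copy of $f$), and tracking which ball each intermediate image lies in.
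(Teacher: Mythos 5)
Your proof is correct, and it supplies exactly the direct verification that the paper has in mind when it labels Lemma \ref{lemma9} ``elementary'' without giving details. The computation for $h_0$ is immediate, and for $h_1$ you correctly case-split on $x\in B_0$, $x\in B_m$ ($m\ge 1$), $x\notin\bigcup_{m\ge 0}B_m$, and $x=\infty$, track each intermediate image into the correct ball using the disjointness of the $B_m$ and the identity $A_1\circ A_m = A_{m+1}$, and close each case by observing that $f$ is the identity outside $B_0$. You also correctly read the paper's convention that the propagation is indexed by $m\ge 1$ (the paper states $g$ is supported in $\bigcup_{m=1}^{\infty}B_m$), so that $g$ is the identity on $B_0$; this is the one place where a misreading would break the argument, and you handled it properly.
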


Observe that $h_{t}$ is a path that connects the identity and $f$ through bi-Lipschitz mappings, for $0 \leq t \leq 1$. We now want to show that this is a bi-Lipschitz path.

\begin{lemma}
\label{lemma6}
Given $\epsilon >0$, there exists $\delta>0$ such that if $s,t \in [0,1]$ satisfy $\av s-t \av <\delta$, then
\begin{equation*}
d( h_{s} \circ h_{t}^{-1}(x), x)  \leq \epsilon,
\end{equation*}
for all $x \in \R^{n}$.
\end{lemma}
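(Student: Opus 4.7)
The plan is to rewrite $d(h_s \circ h_t^{-1}(x),x)$ as $d(h_s(y), h_t(y))$ where $y = h_t^{-1}(x)$, and then bound this quantity uniformly in $y \in \R^n$ using the uniform continuity provided by Lemma \ref{lemma3}. Since $h_t^{-1}$ is a bijection of $\R^n$, a uniform bound over $y$ translates directly to a uniform bound over $x$.

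First I would unpack the composition. Using the definition $h_t = g^{-1} \circ A_t^{-1} \circ g \circ A_t$ together with the explicit form of the translation $A_t^{-1}(z) = z - te_1$, I obtain
\begin{equation*}
A_s^{-1}(g(A_s(y))) - A_t^{-1}(g(A_t(y))) = \bigl[g(A_s(y)) - g(A_t(y))\bigr] - (s-t)e_1.
\end{equation*}
Because $|A_s(y) - A_t(y)| = |s-t|$ for every $y$, the uniform continuity of $g$ (part (i) of Lemma \ref{lemma3}) makes the bracketed difference uniformly small in $y$ when $|s-t|$ is small, and the second term is literally $|s-t|$. So there is some $\delta_1$ with
\begin{equation*}
|A_s^{-1}(g(A_s(y))) - A_t^{-1}(g(A_t(y)))| < \epsilon_1 \quad \text{for all } y \in \R^n \text{ whenever } |s-t| < \delta_1,
\end{equation*}
where $\epsilon_1 > 0$ is at our disposal.

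Next I would apply uniform continuity of $g^{-1}$, guaranteed by the last clause of Lemma \ref{lemma3}. Given $\epsilon > 0$, pick $\epsilon_1$ so that $|g^{-1}(u) - g^{-1}(v)| < \epsilon$ whenever $|u-v| < \epsilon_1$. Then choose $\delta \le \delta_1$ (depending on this $\epsilon_1$) so that the previous step applies. This yields
\begin{equation*}
d(h_s(y), h_t(y)) = |g^{-1}(A_s^{-1}(g(A_s(y)))) - g^{-1}(A_t^{-1}(g(A_t(y))))| < \epsilon,
\end{equation*}
uniformly in $y$, which is the desired conclusion after substituting $y = h_t^{-1}(x)$.

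There is no real obstacle here beyond bookkeeping: the main point is that uniform continuity of $g$ and $g^{-1}$ on all of $\R^n$, not merely on a compact set, is exactly what is needed, and this was already secured by Lemma \ref{lemma3}. The translation structure of $A_t$ makes the inner perturbation explicit and eliminates any need to control derivatives of $g$ in this lemma.
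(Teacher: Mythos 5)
Your proof is correct and a bit more streamlined than the paper's. Both arguments begin by expanding $h_s \circ h_t^{-1}$ in terms of $g$, $g^{-1}$, and the translations, but you simplify the bookkeeping by substituting $y = h_t^{-1}(x)$ and comparing $h_s(y)$ with $h_t(y)$ directly; this collapses the eight-fold nested composition the paper works with into two four-fold ones and makes the inner perturbation $[g(A_s(y))-g(A_t(y))]-(s-t)e_1$ transparent. The paper instead isolates the middle block $P_{s,t}=g\circ A_s\circ A_t^{-1}\circ g^{-1}$ and peels off layers from inside out. A second difference is the regularity used: you invoke only uniform continuity of $g$ and $g^{-1}$ (part (i) of Lemma \ref{lemma3}), whereas the paper uses the uniform derivative bound $\av\av D_x g\av\av\le T$ (part (ii)), which has the advantage of producing the explicit choice $\delta = \epsilon/\bigl(T(T+1)\bigr)$. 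Your argument is marginally more economical in hypotheses, the paper's is more quantitative; both are valid.
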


\begin{proof}
Writing $h_{s}\circ h_{t}^{-1}$ out in full gives
\begin{equation}
\label{l6eq1}
h_{s} \circ h_{t}^{-1}  = g^{-1} \circ A_{s}^{-1} \circ g \circ A_{s} \circ A_{t}^{-1} \circ g^{-1} \circ A_{t} \circ g.
\end{equation}
Considering first the middle four functions in this expression, write 
\begin{equation}
\label{l6eq2}
P_{s,t}(x) = g \circ A_{s} \circ A_{t}^{-1} \circ g^{-1}(x).
\end{equation}
Then the fact that 
\begin{equation*}
d(g(x),g(y)) \leq \sup_{x} \av \av D_{x}g \av \av \cdot d(x,y), 
\end{equation*}
and (\ref{l3eq1}) gives
\begin{align*}
d( P_{s,t}(x), x ) &=
d( g(g^{-1}(x) + (s-t)e_{1}) , g(g^{-1}(x)) ) \\
&\leq T d( g^{-1}(x) + (s-t)e_{1} , g^{-1}(x) ) \\
&= T \av s-t \av,
\end{align*}
for all $x \in \R^{n}$.
Next, by using the the fact that translations are isometries of $\R^{n}$, the triangle inequality and the previous inequality applied to $x+te_{1}$, we obtain 
\begin{align}
\label{l3eq3}
\notag d( A_{s}^{-1} \circ P_{s,t} \circ A_{t}(x) , x) &= d( P_{s,t}(x+te_{1}) - se_{1},x)  \\ \notag
&= d( P_{s,t}(x+te_{1}) , (x+te_{1}) + (s-t)e_{1} )\\ \notag
&\leq d( P_{s,t}(x+te_{1}) , (x+te_{1})) + d(x+te_{1},x+te_{1} + (s-t)e_{1}) \\
&\leq (T+1)\av s-t \av,
\end{align}
for all $x \in \R^{n}$.
Finally, we use (\ref{l3eq1}) with $g^{-1}$ and (\ref{l3eq3}) applied to $g(x)$ to obtain
\begin{align*}
d( h_{s} \circ h_{t}^{-1}(x),x ) &= d(g^{-1} \circ A_{s}^{-1} \circ P_{s,t} \circ A_{t} \circ g(x) , g^{-1}(g(x))) \\
&\leq T d (A_{s}^{-1} \circ P_{s,t} \circ A_{t} \circ g(x) , g(x) ) \\
&\leq T(T+1)\av s-t \av,
\end{align*}
for all $x \in \R^{n}$. We can therefore take $\delta = \epsilon / T(T+1)$.
\end{proof}

\begin{lemma}
\label{lemma7}
Given $\epsilon >0$, there exists $\delta>0$ such that if $s,t \in [0,1]$ satisfy $\av s-t \av <\delta$, 
then 
\begin{equation*}
\av \av D_{x}(h_{s} \circ h_{t}^{-1}) - I \av \av < \epsilon
\end{equation*}
for all $x \in \R^{n}$, where $I$ is the identity mapping.
\end{lemma}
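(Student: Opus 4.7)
The plan is to apply the chain rule to
\[ h_s \circ h_t^{-1} = g^{-1} \circ A_s^{-1} \circ g \circ A_{s-t} \circ g^{-1} \circ A_t \circ g, \]
where $A_{s-t} := A_s \circ A_t^{-1}$ is translation by $(s-t)e_1$. Since every translation has derivative equal to the identity, the chain rule reduces $D_x(h_s \circ h_t^{-1})$ to a product of four derivative matrices,
\[ D_x(h_s \circ h_t^{-1}) = (Dg^{-1})_{p_1}\,(Dg)_{p_3}\,(Dg^{-1})_{p_5}\,(Dg)_x, \]
with $p_5 = g(x)+te_1$, $p_3 = g^{-1}(p_5)+(s-t)e_1$, $p_1 = g(p_3)-se_1$. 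When $s=t$ the points collapse: $p_3$ becomes $g^{-1}(p_5)$ and $p_1$ becomes $g(x)$, whereupon the inverse-function identities $(Dg^{-1})_{g(x)} = (D_xg)^{-1}$ and $(Dg)_{g^{-1}(p_5)} = \bigl((Dg^{-1})_{p_5}\bigr)^{-1}$ force the product to telescope to $I$.

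The first step is to show that the ``moving'' evaluation points $p_3$ and $p_1$ lie close, \emph{uniformly in $x$}, to their $s=t$ counterparts $p_3^{\ast} := g^{-1}(p_5)$ and $p_1^{\ast} := g(x)$. The bound $\av p_3-p_3^{\ast} \av = \av s-t \av$ is immediate. For $p_1$, using the uniform Lipschitz bound $T$ on $g$ from Lemma \ref{lemma3}(ii), a short calculation in the spirit of Lemma \ref{lemma6} gives $\av p_1 - p_1^{\ast} \av \leq (T+1)\av s-t \av$. Applying the modulus-of-continuity bound from Lemma \ref{lemma3}(iii) then controls the two moving matrix factors:
\[ \av \av (Dg^{-1})_{p_1} - (Dg^{-1})_{p_1^{\ast}} \av \av \leq \eta\bigl((T+1)\av s-t \av\bigr), \]
and similarly $\av \av (Dg)_{p_3} - (Dg)_{p_3^{\ast}} \av \av \leq \eta(\av s-t \av)$.

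The second step is to compare
\[ (Dg^{-1})_{p_1}(Dg)_{p_3}(Dg^{-1})_{p_5}(Dg)_x \quad \text{with} \quad (Dg^{-1})_{p_1^\ast}(Dg)_{p_3^\ast}(Dg^{-1})_{p_5}(Dg)_x = I, \]
via the standard telescoping identity for a difference of products, bounding each of the four factors in operator norm by $T$ using Lemma \ref{lemma3}(ii). This yields a uniform estimate of the form
\[ \av \av D_x(h_s \circ h_t^{-1}) - I \av \av \leq 2T^3\,\eta\bigl((T+1)\av s-t \av\bigr). \]
Since $\eta$ is continuous at $0$, choosing $\delta>0$ so that the right-hand side is smaller than $\epsilon$ gives the desired bound for all $\av s-t \av < \delta$ and all $x \in \R^n$.

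The main obstacle is arranging everything to be uniform in $x \in \R^n$. Because $g$ is built by propagating $f$ over infinitely many translated copies of $B_0$, a merely pointwise $C^1$ statement would not suffice; Lemma \ref{lemma3} was set up precisely to furnish a single Lipschitz constant $T$ and a single modulus of continuity $\eta$ valid across all of $\R^n$. Once those uniform properties are in hand, the remainder is a bookkeeping exercise: tracking the four evaluation points through the composition, and assembling the product-difference inequality.
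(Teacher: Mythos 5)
Your proof is correct and follows essentially the same approach as the paper: apply the chain rule (translations have identity derivative), observe via the inverse-function identities that the resulting product of four derivative matrices collapses to $I$ when $s=t$, and then bound the drift of the evaluation points uniformly in $x$ using the global Lipschitz constant $T$ and modulus of continuity $\eta$ supplied by Lemma \ref{lemma3}. The only difference is organizational: the paper first estimates $\av\av D_xQ_{s,t}-I\av\av \leq T\eta(\av s-t\av)$ for the inner block $Q_{s,t}$ and then handles the outer conjugation $g^{-1}\circ Q_{s,t}\circ g$ via a second decomposition, whereas you flatten all four derivative factors and apply a single telescoping difference-of-products estimate, arriving at a comparable bound.
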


\begin{proof}
Recalling the strategy of the proof of the previous lemma, we will consider the middle six terms of (\ref{l6eq1}) and work outwards. Recall the definition of $P_{s,t}$ from (\ref{l6eq2}) and write $Q_{s,t}=A_{s}^{-1} \circ P_{s,t} \circ A_{t}$.
Observe that
\begin{equation*}
D_{x}Q_{s,t} = D_{A_{t}(x)}P_{s,t}
\end{equation*}
and
\begin{equation*}
D_{x}P_{s,t} = D_{A_{s} \circ A_{t}^{-1} \circ g^{-1}(x)} g \circ D_{x}g^{-1}
\end{equation*}
since the derivative of $A_{t}$ is the identity.
By this observation, the chain rule gives
\begin{equation}
\label{l3eq4}
\av \av D_{x}(Q_{s,t}) - I \av \av 
= \av \av (D_{A_{s}\circ A_{t}^{-1}\circ g^{-1} \circ A_{t}(x)}g) \circ (D_{A_{t}(x)}g^{-1}) - I \av \av.
\end{equation}
We can write the right hand side of (\ref{l3eq4}) as
\begin{equation*}
\av \av \left [ (D_{A_{s}\circ A_{t}^{-1}\circ g^{-1} \circ A_{t}(x)}g) - \left( (D_{A_{t}(x)}g^{-1}) \right )^{-1} \right ] \circ (D_{A_{t}(x)}g^{-1}) \av \av.
\end{equation*}
Using this, and applying the formula for the derivative of an inverse $(D_{A_{t}(x)}g^{-1})^{-1} = D_{g^{-1}(A_{t}(x))}g$ and 
(\ref{l3eq1}) applied to $g^{-1}$, yields from (\ref{l3eq4}) that
\begin{equation}
\label{l3eq5}
\av \av D_{x}(Q_{s,t}) - I \av \av \leq 
T \av \av (D_{A_{s}\circ A_{t}^{-1}\circ g^{-1} \circ A_{t}(x)}g) - (D_{g^{-1}\circ A_{t}(x)}g) \av \av .
\end{equation}
We then apply (\ref{l3eq2}) to the right hand side of 
(\ref{l3eq5}) to give
\begin{align}
\label{l3eq6}
\av \av D_{x}(Q_{s,t}) - I \av \av 
&\leq T \eta ( \av A_{s} \circ A_{t}^{-1} \circ g^{-1} \circ A_{t}(x) -g^{-1} \circ A_{t}(x) \av )\\
\notag &= T \eta(\av s-t\av),
\end{align}
for all $x \in \R^{n}$.

Now, consider the derivative of
$h_{s} \circ h_{t}^{-1} = g^{-1} \circ Q_{s,t} \circ g$.
By the chain rule, we have
\begin{equation}
\label{l3eq7}
\av \av D_{x}(g^{-1} \circ Q_{s,t} \circ g) - I \av \av
= \av \av (D_{Q_{s,t}(g(x))}g^{-1}) \circ (D_{g(x)}Q_{s,t}) \circ (D_{x}g) - I \av \av.
\end{equation}
We can write the right hand side of (\ref{l3eq7}) as
\begin{equation*}
\av \av (D_{Q_{s,t}(g(x))}g^{-1}) \circ \left [ D_{g(x)}Q_{s,t} - I \right ] \circ (D_{x}g) 
 + (D_{Q_{s,t}(g(x))}g^{-1}) \circ (D_{x}g) - I \av \av.
\end{equation*}
Applying the triangle inequality and (\ref{l3eq1}) for $g$ and $g^{-1}$ to this expression yields
\begin{equation}
\label{l3eq8}
\av \av D_{x}(g^{-1} \circ Q_{s,t} \circ g) - I \av \av
\leq T^{2} \av \av  D_{g(x)}Q_{s,t} - I \av \av + \av \av (D_{Q_{s,t}(g(x))}g^{-1}) \circ (D_{x}g) - I \av \av
\end{equation}
We next apply (\ref{l3eq6}) to the first term on the right hand side of (\ref{l3eq8}), and re-write the second term to give
\begin{equation}
\label{l3eq9}
\av \av D_{x}(g^{-1} \circ Q_{s,t} \circ g) - I \av \av
\leq T^{3} \eta(\av s-t \av) + \av \av \left [ D_{Q_{s,t}(g(x))}g^{-1} - (D_{x}g)^{-1} \right ] \circ (D_{x}g) \av \av
\end{equation}
We use the formula $(D_{x}g)^{-1} = D_{g(x)}g^{-1}$
and (\ref{l3eq1}) applied to $g$ on the second term 
on the right hand side of (\ref{l3eq9}) to yield
\begin{equation*}
\av \av D_{x}(g^{-1} \circ Q_{s,t} \circ g) - I \av \av 
\leq T^{3} \eta(\av s-t \av) + T \av \av D_{Q_{s,t}(g(x))}g^{-1} - D_{g(x)}g^{-1} \av \av 
\end{equation*}
Finally, (\ref{l3eq2}) and (\ref{l3eq3}) give
\begin{align*}
\av \av D_{x}(g^{-1} \circ Q_{s,t} \circ g) - I \av \av & \leq T^{3} \eta(\av s-t \av) + T\eta( \av Q_{s,t}(g(x)) - g(x) \av )\\
&\leq T^{3} \eta(\av s-t \av )+ T \eta ( (T+1) \av s-t \av).
\end{align*}
Since $\lim _{x \rightarrow 0} \eta(x)=0$, the lemma follows. 
\end{proof}

Lemmas \ref{lemma9}, \ref{lemma6} and \ref{lemma7} together show that $h_{t}$
is a bi-Lipschitz path with respect to $d$ connecting the identity
and $f$.
This completes the proof.

\end{document}